\documentclass[12pt]{article}

%
%

\textheight=8.7truein
\textwidth=6.68truein
\hoffset=-0.6truein
\voffset=-.6truein

\usepackage{url}
\usepackage{amsmath}
\usepackage{amssymb}
\usepackage{amsfonts}

\usepackage{enumitem}
\usepackage{amsthm}

\newcommand{\N}{\mathbb{N}}
\newcommand{\bbp}{\mathbb{P}}

\newcommand{\R}{\mathbb{R}}
\newcommand{\Z}{\mathbb{Z}}
\newcommand\iid{i.i.d.\ }

\theoremstyle{plain}
\newtheorem{theorem}{Theorem}
\newtheorem{lemma}[theorem]{Lemma}
\newtheorem{corollary}[theorem]{Corollary}
\newtheorem{proposition}[theorem]{Proposition}

\theoremstyle{definition}
\newtheorem{definition}[theorem]{Definition}
\newtheorem{example}[theorem]{Example}

%

\begin{document}

\title{The Mathematics of Benford's Law - A Primer}

\author{Arno Berger and Theodore P.~Hill}

\maketitle


\begin{abstract}
\noindent
This article provides a concise overview of the main mathematical
theory of Benford's law in a form accessible to scientists and
students who have had first courses in calculus and probability. In 
particular, one of the main objectives here is to aid researchers who 
are interested in applying Benford's law, and need to understand
general principles clarifying when to expect the appearance of 
Benford's law in real-life data and when not to expect it. A second 
main target audience is students of statistics or mathematics, at 
all levels, who are curious about the mathematics underlying this 
surprising and robust phenomenon, and may wish to delve more deeply
into the subject. This survey of the fundamental principles behind 
Benford's law includes many basic examples and theorems, but does 
not include the proofs or the most general statements of the theorems; 
rather it provides precise references where both may be found. 
\end{abstract}

\section{{Introduction}}\label{sec1}

Applications of the well-known statistical phenomenon called {\em
  Benford's law\/}, or {\em first-digit law}, have been increasing
dramatically in recent years. The online Benford database \cite{BerAHR09}, for
example, shows over 800 new entries in the past decade alone.  At the
{\it Cross-domain Conference on Benford's Law Applications\/} hosted 
by the Joint Research Centre of the European Commission in Stresa,
Italy in July 2019, organizers and participants both expressed a need
for a readily available and relatively non-technical summary of the 
mathematics underlying Benford's law. This article is an attempt to
satisfy that request. As such, this overview of the mathematics of 
Benford's Law is formulated without relying on more advanced concepts 
from such mathematical fields as measure theory and complex analysis.  

The topic of Benford's law has a rich and fascinating history. First
recorded in the 19th century, it is
now experiencing a wide variety of applications including detection 
of tax and voting fraud, analysis of digital images, and
identification of anomalies in medical, physical, and macroeconomic
data, among others. The interested reader is referred to
\cite{BerAH15,MilS15,NigM12B} for more extensive details on the 
history and applications of Benford's law. 

It is our hope that the present Benford primer will be useful for two
groups of readers in particular: First, researchers who are interested
in applying Benford's law, and need to understand general principles
clarifying when to expect the appearance of Benford's law in real-life
data, and when not to expect it; and second, science students at both 
the undergraduate and graduate levels who are curious about the
mathematical basis for this surprising phenomenon, and may wish to
delve more deeply into the subject and perhaps even try their hands at
solving some of the open problems. 

This survey includes special cases of most of the main Benford
theorems, and many concrete examples, but does not include proofs or
the most general statements of the theorems, most of which may be found as indicated in \cite{BerAH15}. 
The structure of the article is as follows: Section 2 contains the
notation and definitions; Section 3 the basic properties that
characterize Benford behavior; Section 4 the Benford properties of
sequences of constants; Section 5 the Benford properties of sequences
of random variables; and Section 6 a brief discussion of four common errors.

\section{Basic notation and definitions}\label{sec2}

In this survey, the emphasis is on {\em decimal\/}
representations of numbers, the classical setting of Benford's law, so
here and throughout $\log t$ means $\log_{10} t$, and all digits are
{\em decimal\/} digits. For other bases such as binary or hexadecimal,
analogous results hold with very little change, simply by replacing
$\log$ with $\log_b$ for the appropriate base $b$; the interested
reader is referred to \cite[p.\ 9]{BerAH15} for details.

Here and throughout, $\N = \{1, 2, 3,  \ldots\}$ denotes the positive
integers (or natural numbers), $\Z = \{\ldots, -2, -1, 0, 1, 2, \ldots\}$ the integers,
$\R = (-\infty, \infty)$ the real numbers, and $\R^+ = (0, \infty)$
the positive real numbers.  For real numbers $a$ and $b$, $[a, b)$
denotes the set (in fact, half-open interval) of all $x \in \R$ with $a \leq x < b$;
similarly for $(a, b], (a,b), [a,b]$.  
Every real number $x$ can be expressed uniquely as $x= \lfloor x
\rfloor + \langle x \rangle$, where $\lfloor x \rfloor$ and $ \langle
x \rangle$ denote the {\em integer part} and the {\em fractional part}
of $x$, respectively.  Formally, $\lfloor x \rfloor = \max \{k \in
\Z:k \leq x \}$ and $ \langle x \rangle = x - \lfloor x \rfloor$. For
example, $\lfloor 2 \rfloor = 2$ and $ \langle 2 \rangle = 0$, whereas
$\lfloor 10 \pi \rfloor = \lfloor 31.4 \ldots \rfloor = 31$ and
$\langle 10 \pi \rangle = 0.415 \ldots$.

The basic notion underlying Benford's law concerns the {\em leading
  significant digits\/} and, more generally, the {\em significand\/} of
a number (also sometimes referred to as the {\em mantissa\/} in
scientific notation). 

\begin{definition}\label{def21} 
For $x \in \R^+$, the ({\em decimal\/}) {\em significand\/} of $x$, denoted
$S(x)$, is given by $S(x) = t$, where $t$ is the unique number in $[1,
10)$ with $x = 10^k t$ for some (necessarily unique) $k \in \Z$. For
negative $x$, $S(x) = S(-x)$, and for convenience, $S(0)=0$.
\end{definition}

\begin{example}\label{ex22}
$S(2019) = 2.019 = S(0.02019) =S(-20.19) $.
\end{example}

\begin{definition}\label{def23}
The {\em first\/} ({\em decimal\/}) {\em significant digit\/} of $x\in \R$,
denoted $D{_1}(x)$, is the first (left-most) digit of $S(x)$, where by
convention the terminating decimal representation is used
if $S(x)$ has two decimal representations.
Similarly, $D{_2}(x)$ denotes the second digit of $S(x)$,  $D_3(x)$
the third digit of $S(x)$, and so on. (Note that $D_n(0) = 0$ for all
$n\in \N$.)
\end{definition}

\begin{example}\label{ex24}
$D{_1}(2019) = D{_1}(0.02019) = D{_1}(-20.19) = 2$, $D{_2}(2019) = 0$,
$D{_3}(2019) = 1$, $D{_4}(2019) = 9$, and $D_j(2019) = 0$ for all $j
\ge 5$. Also, $D_n(2019) \! = D_n (2018.9999\ldots)$ for all $n\in \N$.
\end{example}

As will be seen next, the formal notions of a Benford sequence of
numbers and a Benford random variable are defined via the
significands, or equivalently, via the significant digits of the
sequence and the random variable. 
An infinite sequence of real numbers $(x_1, x_2, x_3, \ldots)$ is
denoted by $(x_n)$;  e.g., $(2^n) = (2, 2^2, 2^3, \ldots) = (2, 4, 8,
\ldots)$.  In the next definition, $\# A $ denotes the number of
elements of the set $A$; e.g., $\#\{2, 0, 1, 9\} = 4$.

\begin{definition}\label{def28} 
A sequence of real numbers $(x_n)$ is a {\em Benford sequence},
or {\em Benford} for short, if for every $t \in [1,10)$, the limiting
proportion of $x_n$'s with significand less than or equal to $t$ is
exactly $\log t$, i.e., if
$$
\lim\nolimits_{N\to \infty} \frac{\# \{ 1 \le  n \le N : S(x_n) \le t \}}{N} = \log t 
\quad \mbox{\rm for all }  t \in [1,10).
$$ 
\end{definition}

\begin{example}\label{ex30}
(i) The sequence of positive integers $(n) = (1, 2, 3, \ldots)$ is
  not Benford, since, for example, more than half the entries less
  than $2 \cdot 10^m$ have first digit 1 for every positive integer
  $m$, so the limiting proportion of entries with significand less
  than or equal to 2, if it exists at all, cannot be $\log 2 <
  0.5$. Similarly, the sequence of prime numbers $(2, 3, 5, 7, 11,
  \ldots)$ is not Benford, but the demonstration of this fact is
  deeper; see \cite[Example 4.17(v)]{BerAH15}. 

\smallskip

\noindent
(ii) As will be seen in Example \ref{ex331} below, the sequences
  $(2^n)$ and $(3^n)$ of powers of 2 and 3 are Benford.  Many other
  classical sequences including the Fibonacci sequence $(1, 1, 2, 3,
  5, \ldots)$ and the sequence of factorials $(n!) = (1, 2, 6, 24,
  120, \ldots)$ are also Benford.
\end{example}

An equivalent description of a Benford sequence in terms of the
limiting proportions of values of its significant digits is as follows.

\begin{proposition}\label{prop29}
A sequence $(x_n)$ of real numbers is Benford if and only if 
\begin{align*}
\lim\nolimits_{N\to \infty} & \frac{ \# \{ 1 \le n \le N  : 
D_1(x_n) = d_1, D_2(x_n) = d_2,  \ldots, D_m(x_n) =  d_m \} }{N} = \\[2mm]
& \qquad = \log \left(  1  + \frac1{10^{m-1}d_1 + 10^{m-2} d_2+\ldots  + d_{m} }  \right),
\end{align*}
for  all $ m\in \N$, all $d_1\in \{1, 2, \ldots , 9\}$, and all
$d_j \in \{0,1, \ldots, 9\} $, $ j\ge2$.
\end{proposition}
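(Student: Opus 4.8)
The plan is to translate the statement about significant digits into one about the significand $S(x_n)$ and then invoke Definition~\ref{def28} directly. The bridge is the elementary observation that, writing $D = 10^{m-1}d_1 + 10^{m-2}d_2 + \cdots + d_m$ for the integer whose decimal digits are $d_1 d_2 \ldots d_m$, the digit event coincides with a significand interval:
\[
D_1(x) = d_1,\ \ldots,\ D_m(x) = d_m \quad \Longleftrightarrow \quad S(x) \in \left[\frac{D}{10^{m-1}},\, \frac{D+1}{10^{m-1}}\right).
\]
This follows from Definition~\ref{def23} by reading off the first $m$ digits of $S(x) \in [1,10)$; the half-open form (left endpoint included, right endpoint excluded) is exactly what the convention of using the terminating representation delivers. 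First I would establish this dictionary, noting that the left endpoint $a := D/10^{m-1}$ ranges over $[1,10)$ and the right endpoint $b := (D+1)/10^{m-1}$ over $(1,10]$, with $b = 10$ occurring only in the all-nines case $d_1 = \cdots = d_m = 9$.

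For the forward (``only if'') direction, assume $(x_n)$ is Benford. A single fixed significand value carries vanishing limiting proportion: for $c \in [1,10)$ and $\varepsilon > 0$, the count $\#\{n \le N : S(x_n) = c\}$ is dominated by $\#\{n \le N : c - \varepsilon < S(x_n) \le c\}$, whose limiting proportion is at most $\log c - \log(c-\varepsilon)$ (or at most $\log(1+\varepsilon)$ when $c = 1$), tending to $0$ with $\varepsilon$. Hence $[a,b)$ and $(a,b]$ have the same limiting proportion, and since $\#\{S(x_n)\in(a,b]\} = \#\{S(x_n)\le b\} - \#\{S(x_n)\le a\}$, Definition~\ref{def28} gives
\[
\lim_{N\to\infty} \frac{\#\{n \le N : S(x_n) \in [a,b)\}}{N} = \log b - \log a = \log\frac{D+1}{D} = \log\left(1 + \frac{1}{D}\right),
\]
which is the asserted formula (the edge case $b = 10$ is handled by $\#\{n \le N : S(x_n) < 10\} = N$, i.e.\ $\log 10 = 1$).

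For the reverse (``if'') direction, assume the digit formula for every $m$ and every admissible digit string; I would recover Definition~\ref{def28} by a finite-decimal approximation. Fix $t \in [1,10)$ and set $\tau_m = \lfloor t\,10^{m-1}\rfloor / 10^{m-1}$, the truncation of $t$ to its first $m$ significant digits, so that $\tau_m \le t < \tau_m + 10^{1-m}$. The interval $[1, \tau_m)$ is the disjoint union of the digit cylinders $[D'/10^{m-1}, (D'+1)/10^{m-1})$ for $D' = 10^{m-1}, \ldots, \lfloor t\,10^{m-1}\rfloor - 1$; applying the dictionary and the hypothesis to each of these finitely many cylinders and summing, the limiting proportion of $S(x_n)$ in $[1,\tau_m)$ exists and equals the telescoping sum $\sum_{D'} \log\frac{D'+1}{D'} = \log \tau_m$. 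The identical computation for $[1, \tau_m + 10^{1-m})$ yields limiting proportion $\log(\tau_m + 10^{1-m})$.

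Finally I would combine these with the inclusions $\{S(x_n) \in [1,\tau_m)\} \subseteq \{S(x_n) \le t\} \subseteq \{S(x_n) \in [1, \tau_m + 10^{1-m})\}$, valid for every $N$ because $\tau_m \le t < \tau_m + 10^{1-m}$, to sandwich the empirical proportion:
\[
\log \tau_m \le \liminf_{N\to\infty} \frac{\#\{n \le N : S(x_n) \le t\}}{N} \le \limsup_{N\to\infty} \frac{\#\{n \le N : S(x_n) \le t\}}{N} \le \log(\tau_m + 10^{1-m}).
\]
Letting $m \to \infty$ and using $\tau_m \to t$ with the continuity of $\log$ forces both outer bounds to $\log t$, so the limit exists and equals $\log t$, as required. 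The main obstacle is precisely this reverse direction: the hypothesis constrains only the countable family of decimal cylinders, whereas Definition~\ref{def28} demands the value $\log t$ at every $t \in [1,10)$, and the truncation-and-squeeze argument is what bridges that gap; the forward implication and the digit-to-significand dictionary are routine by comparison.
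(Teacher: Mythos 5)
Your proposal is correct in both directions and follows what is in substance the standard argument; note that the paper itself gives no proof of Proposition~\ref{prop29} at all (like most results in this survey, its justification is deferred to \cite{BerAH15}), so there is no in-paper proof to compare against line by line. Your three ingredients --- the dictionary identifying the digit event with the significand interval $[D/10^{m-1},(D+1)/10^{m-1})$ under the terminating-representation convention, the observation that any single significand value has vanishing limiting proportion under the Benford hypothesis (so that closed and half-open intervals carry the same proportion), and the truncation-and-squeeze recovery of $\log t$ from the cylinder proportions via the telescoping sum $\sum_{D'}\log\frac{D'+1}{D'}=\log\tau_m$ --- are exactly what a self-contained proof requires, and the edge cases $b=10$ and $\tau_m=1$ are handled correctly. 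One loose end deserves attention: because the paper adopts the conventions $S(0)=0$ and $D_n(0)=0$, the inclusion $\{S(x_n)\le t\}\subseteq\{S(x_n)\in[1,\tau_m+10^{1-m})\}$, which you assert is ``valid for every $N$,'' fails at any index with $x_n=0$, since then $S(x_n)=0\le t$ while $S(x_n)\notin[1,\tau_m+10^{1-m})$. The repair comes from your own hypothesis: applying it with $m=1$ and summing over $d_1\in\{1,\ldots,9\}$ gives limiting proportion $\sum_{d=1}^{9}\log\left(1+\frac{1}{d}\right)=\log 10=1$ for the indices with $x_n\ne 0$, so the zero entries have limiting density zero; adding their count to the right-hand side of your sandwich perturbs the upper bound by a term that vanishes in the limit, and the squeeze goes through unchanged. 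With that one-line amendment your proof is complete and, in the forward direction, already handles zero entries correctly since they cancel in the difference $\#\{S(x_n)\le b\}-\#\{S(x_n)\le a\}$.
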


\begin{example}
Proposition \ref{prop29} with $m=1$ yields  the well-known {\em
  first-digit law}: For every Benford sequence of real numbers $(x_n)$,
$$
\lim\nolimits_{N\to \infty} \! \frac{\# \{ 1\le n \le N : D_1(x_n) = d\}}{N}= 
\log \left(1 + \frac{1}{d}\right)
 \quad \!\! \! \mbox{\rm for all } d \in \! \{1,2, \ldots, 9\}.
$$
\end{example}

The notion of a Benford random variable (or dataset) is essentially
the same as that of a Benford sequence, with the limiting proportion
of entries replaced by the {\em probability\/} of the random values.

\begin{definition}\label{def25}
A (real-valued) random variable $X$ is {\em Benford\/} if 
$$
P(S( X ) \leq t) = \log t \quad \mbox{\rm for all } t \in [1, 10).
$$
\end{definition}

Recall that a random variable $U$ is said to be {\em uniformly
  distributed\/} on $[0,1]$ if $P(U \leq s) = s$ for all $s \in [0, 1]$. 

\begin{example}\label{ex354}
Let $U$ be uniformly distributed on $[0,1]$.

\smallskip

\noindent
(i) $U$ is not Benford, since as is easy to check, $P(S(U) \leq 2) =
  \frac19 < \log2$.

\smallskip

\noindent
(ii) $X = 10^U$ is Benford, since $S(X) = X$, and $P(S(X) \leq t) =
  P(X \leq t) = P(10^U \leq t) = P(U \leq \log t) = \log t$ for all $t
  \in [1, 10)$. In fact, this construction provides an excellent way
  of generating random data that follows Benford's law on a digital
  computer: Use any standard program to generate $U$, and then raise 10 to that power.
\end{example}

The analogous definition of a Benford random variable in terms of
significant digits follows similarly.

\begin{proposition}\label{prop26}
A random variable $X$ is Benford if and only if 
\begin{align*}
P\bigl(D_1(X) & = d_1, D_2 (X) = d_2 , \ldots , D_m(X) = d_m \bigr) = \\
& \qquad = \log \left(
  1  + \frac1{10^{m-1}d_1+ 10^{m-2} d_2 + \ldots + d_{m}}  \right),
\end{align*}
for all $ m\in \N$, all $d_1\in \{1, 2, \ldots , 9\}$, and all $d_j \in \{0,1, \ldots, 9\}$,  $j\ge2$.
\end{proposition}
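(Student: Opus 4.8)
The plan is to translate the multi-digit events into half-open significand intervals and then to reduce everything to the single-variable statement $P(S(X)\le t)=\log t$ from Definition~\ref{def25}. The crux, which both directions share, is the following exact identity of events. For fixed $m\in\N$ and admissible digits $d_1,\ldots,d_m$, write $c=10^{m-1}d_1+10^{m-2}d_2+\cdots+d_m$, an integer with $10^{m-1}\le c<10^m$. Then the terminating-decimal convention of Definition~\ref{def23} yields
$$
\{D_1(X)=d_1,\ldots,D_m(X)=d_m\}=\bigl\{S(X)\in[\,c\,10^{-(m-1)},(c+1)\,10^{-(m-1)})\bigr\}.
$$
I would establish this first, checking in particular that the convention places each boundary value $c\,10^{-(m-1)}$ into the correct (left-hand) interval, so that as the digits range over all admissible values these intervals are half-open and tile $[1,10)$ without overlap.

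For the ``only if'' direction, assume $X$ is Benford. Since $t\mapsto\log t$ is continuous on $[1,10)$, continuity of probability from below gives $P(S(X)<s)=\lim_{r\uparrow s}P(S(X)\le r)=\log s$; hence $P(S(X)\in[a,b))=P(S(X)<b)-P(S(X)<a)=\log b-\log a$ for every $1\le a<b\le 10$. Applying this to $a=c\,10^{-(m-1)}$ and $b=(c+1)\,10^{-(m-1)}$ and using the displayed identity produces $\log(c+1)-\log c=\log(1+1/c)$, which is exactly the asserted probability.

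For the converse, assume the digit formula holds for all $m$ and all admissible digits. Summing the case $m=1$ over $d_1\in\{1,\ldots,9\}$ telescopes to $\log 10=1$, so $S(X)\in[1,10)$ almost surely. Next fix a terminating decimal $t=j\,10^{-(m-1)}$ with $10^{m-1}\le j\le 10^m$, and partition $[1,t)$ into the consecutive intervals $[c\,10^{-(m-1)},(c+1)\,10^{-(m-1)})$ for $c=10^{m-1},\ldots,j-1$. Each is a digit event of probability $\log\frac{c+1}{c}$ by hypothesis, and the sum telescopes:
$$
P\bigl(S(X)<t\bigr)=\sum_{c=10^{m-1}}^{j-1}\log\frac{c+1}{c}=\log\frac{j}{10^{m-1}}=\log t.
$$
Finally, for arbitrary $t\in(1,10)$ I would sandwich $t$ between terminating decimals $s_n\uparrow t$ and $t_n\downarrow t$; monotonicity gives $\log s_n=P(S(X)<s_n)\le P(S(X)\le t)\le P(S(X)<t_n)=\log t_n$, and letting $n\to\infty$ forces $P(S(X)\le t)=\log t$ (the value $t=1$ follows from the upper bound alone, since $\log 1=0$). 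Thus $X$ is Benford.

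I expect the main obstacle to be the bookkeeping in the first step: verifying that the half-open significand intervals match the digit events exactly under the terminating-decimal convention, and that boundary points carry no mass, so that the distinction between $P(S(X)<t)$ and $P(S(X)\le t)$ never causes trouble. Once that is pinned down, the argument rests only on the telescoping identity $\sum_{c}\log\frac{c+1}{c}=\log\frac{j}{10^{m-1}}$ and the standard continuity of probability measures.
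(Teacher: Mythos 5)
Your proof is correct. The paper itself gives no proof of Proposition~\ref{prop26} (it defers to \cite{BerAH15}), but your argument --- identifying each digit event with the half-open significand interval $[c\,10^{-(m-1)},(c+1)\,10^{-(m-1)})$, telescoping over these intervals, and passing from terminating decimals to general $t$ by a monotone sandwich --- is precisely the standard proof of this equivalence, so this counts as essentially the same approach.
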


\begin{example}\label{ex27}
If $X$ is a Benford random variable, then the probability that $X$ has the same first three digits as $\pi = 3.1415 \ldots$ is 
\begin{align*}
P\bigl( D_1(X) = 3, D_2(X) = 1, D_3(X) = 4 \bigr) & = \log \left( 1 +
  \frac1{10^2 \cdot 3 + 10 \cdot 1 + 4} \right)  \\
& =  \log
\frac{315}{314}  \approx 0.00138.
\end{align*}
\end{example}

None of the classical random variables are Benford exactly, although
some are close for certain values of their parameters.  For example,
no uniform, exponential, normal, or Pareto random variable is Benford
exactly, but Pareto and log normal random variables, among
others, can be arbitrarily close to being Benford depending on the
values of their parameters.

\section{{\bf What properties characterize Benford sequences and
    random variables?}}\label{sec3}

The purpose of this section is to exhibit several fundamental and
useful results concerning Benford sequences and random
variables. These include three basic properties of a sequence of
constants or a random variable that are equivalent to it being 
Benford:
\begin{enumerate}
\item the fractional parts of its decimal logarithm are uniformly
  distributed between 0 and 1;
\item the distribution of its significant digits is invariant under changes of scale; and 
\item the distribution of its significant digits is continuous and invariant under changes of base. 
\end{enumerate}
Analogous definitions and results also hold for {\em Benford
  functions}, for which the interested reader is referred to \cite[Section 3.2]{BerAH15}.

An additional feature demonstrating the robustness of Benford's law is
that if a Benford random variable is multiplied by any independent
positive random variable, then the product is Benford as well.

Recall that a sequence of real numbers $(x_n) = (x_1, x_2, x_3,
\ldots)$ is {\em uniformly distributed modulo one\/} (or {\em mod\/} 1, for short) if 
$$
\lim\nolimits_{N\to \infty}  \frac{\#\{1 \leq n \leq N: \langle x_n \rangle \leq s\}}{N} = s
\quad \mbox{\rm for all } s \in [0,1],
$$
e.g., in the limit, exactly half of the fractional parts $\langle x_n
\rangle$ are less than or equal to $\frac12$, and exactly one third are less than or equal to $\frac{1}{3}$.
The next lemma is a classical equidistribution theorem of Weyl, and,
as will be seen, is a powerful tool in Benford theory.

\begin{lemma}\label{lem341}
The sequence $(na) = (a, 2a, 3a, \ldots)$ is uniformly distributed mod~$1$ if and only if $a$ is irrational. 
\end{lemma}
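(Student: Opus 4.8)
The plan is to treat the two directions separately: the ``only if'' part (in contrapositive form, $a$ rational $\Rightarrow$ not u.d.) is elementary, while the ``if'' part ($a$ irrational $\Rightarrow$ u.d.) is the substantial one, and I would handle it through Weyl's exponential-sum criterion.

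First I would dispose of the rational case. If $a = p/q$ with $p \in \Z$ and $q \in \N$, then $\langle na \rangle$ depends only on $n \bmod q$; the sequence of fractional parts is therefore periodic and takes at most $q$ distinct values, each a multiple of $1/q$. Hence the limit $\lim_{N\to\infty} \#\{1 \le n \le N : \langle na \rangle \le s\}/N$ exists and, as a function of $s$, is a step function with values in $\{0, 1/q, \ldots, 1\}$. Such a step function cannot coincide with the strictly increasing continuous function $s \mapsto s$ on all of $[0,1]$, so $(na)$ fails to be u.d.\ mod~$1$. This direction uses nothing beyond the structure of residues mod $q$.

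For the irrational case the strategy is to reduce equidistribution to the vanishing of exponential sums. The tool is Weyl's criterion: a sequence $(x_n)$ is u.d.\ mod~$1$ if and only if
$$
\lim_{N\to\infty} \frac1N \sum_{n=1}^N e^{2\pi i h x_n} = 0 \quad \mbox{\rm for every nonzero } h \in \Z .
$$
Granting this, put $x_n = na$, so that $\sum_{n=1}^N e^{2\pi i h n a}$ is a finite geometric series with ratio $z = e^{2\pi i h a}$. Since $a$ is irrational, $ha \notin \Z$, so $z \ne 1$ and the sum equals $z(z^N - 1)/(z - 1)$, of modulus at most $2/|z - 1|$, a bound independent of $N$. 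Dividing by $N$ and letting $N \to \infty$ gives the limit $0$, and Weyl's criterion delivers uniform distribution. Everything here can be rephrased using only $\cos$ and $\sin$, in keeping with the elementary level of the survey.

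The main obstacle is proving Weyl's criterion itself, which is where the real work lies; the geometric-sum estimate above is routine once the criterion is available. Its forward half (u.d.\ forces the sums to vanish) follows by approximating $x \mapsto e^{2\pi i h x}$ by step functions and applying the definition of u.d.\ directly. The harder converse requires squeezing the indicator function $\mathbf{1}_{[0,s]}$ between trigonometric polynomials from above and below --- equivalently, invoking the density of trigonometric polynomials among continuous periodic functions --- so that control of the exponential sums passes to control of the counting fractions. I would isolate this approximation argument as a separate lemma, since it is the one place where genuine analysis, rather than algebra, is needed.
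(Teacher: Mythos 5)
Your proposal is correct. Note, though, that this survey deliberately contains no proofs: the paper's ``proof'' of Lemma~\ref{lem341} is just the citation \cite[Proposition 4.6]{BerAH15}, so what you have written is not an alternative to the paper's argument but a replacement for the one it outsources to the reference. Your argument is the classical one, and both halves are sound: for rational $a=p/q$ the fractional parts $\langle na\rangle$ are periodic and take at most $q$ values (all multiples of $1/q$), so the empirical distribution converges to a step function and cannot equal $s\mapsto s$; for irrational $a$, Weyl's criterion reduces everything to the geometric-series bound $\bigl\lvert \sum_{n=1}^{N} e^{2\pi i h n a} \bigr\rvert \le 2/\lvert e^{2\pi i h a}-1\rvert$, which is valid precisely because $ha\notin\Z$ for every nonzero integer $h$, so the averaged sums vanish in the limit. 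You also correctly locate the genuine work in Weyl's criterion itself, and your sketch of its proof --- the easy half by approximating the exponential by step functions and using the definition of uniform distribution, the converse by sandwiching the indicator $\mathbf{1}_{[0,s]}$ between continuous periodic functions and then trigonometric polynomials via Weierstrass/Fej\'er density --- is the standard route. Carried out in full, this gives a complete, self-contained proof, and it is essentially the proof contained in the reference the paper points to; the one place requiring care is the sandwiching step in the converse, where the approximating functions must be chosen so that their integrals differ from $s$ by at most $\epsilon$, allowing the counting fraction to be squeezed within $\epsilon$ of $s$ before letting $\epsilon\to 0$.
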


\begin{proof}
See \cite[Proposition 4.6]{BerAH15}.  
\end{proof}

\noindent
The application of Lemma \ref{lem341} to the theory of Benford's law
is evident from the following basic characterization of Benford
sequences. (Here and throughout let $\log 0 = 0$ for convenience.)

\begin{theorem}\label{thm330}
A sequence of real numbers $(x_n)$ is Benford if and only if the
sequence $(\log |x_n|)  \! = \!  (\log |x_1|, \log |x_2|, \log |x_3|, \ldots)$ is uniformly distributed mod~$1$.
\end{theorem}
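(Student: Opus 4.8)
The plan is to reduce the stated equivalence to one elementary identity linking the significand to the fractional part of the decimal logarithm; once that identity is in hand, the two defining conditions turn out to be the same statement under a change of variable. The identity I would establish first is
$$\langle \log |x| \rangle = \log S(x) \qquad \text{for every } x \neq 0.$$
To prove it, write $|x| = 10^{k} S(x)$ with $S(x) \in [1,10)$ and $k \in \Z$, as in Definition~\ref{def21}, and take logarithms to get $\log|x| = k + \log S(x)$. Since $S(x) \in [1,10)$ forces $\log S(x) \in [0,1)$, this is precisely the splitting of $\log|x|$ into its integer and fractional parts, so $\lfloor \log|x| \rfloor = k$ and $\langle \log|x| \rangle = \log S(x)$. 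The degenerate term $x = 0$ causes no trouble: the conventions $S(0) = 0$ and $\log 0 = 0$ make both sides of the identity equal $0$.

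Next I would use this identity to match the two counting functions directly. Fix $t \in [1,10)$ and put $s = \log t$, so that $s$ runs over $[0,1)$ as $t$ runs over $[1,10)$. Because $\log$ is strictly increasing, for each index $n$ we have $S(x_n) \le t$ if and only if $\log S(x_n) \le \log t$, that is, if and only if $\langle \log|x_n| \rangle \le s$. Hence the sets
$$\{\, 1 \le n \le N : S(x_n) \le t \,\} \qquad \text{and} \qquad \{\, 1 \le n \le N : \langle \log|x_n| \rangle \le s \,\}$$
are identical for every $N$, and so are their cardinalities and the associated proportions.

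Finally I would read off the theorem. By Definition~\ref{def28}, $(x_n)$ is Benford exactly when the first proportion tends to $\log t = s$ for all $t \in [1,10)$; by definition, $(\log|x_n|)$ is uniformly distributed mod~$1$ exactly when the second proportion tends to $s$ for all $s \in [0,1]$. Under $s = \log t$ these are the identical requirement for every $s \in [0,1)$, and the one leftover value $s = 1$ adds nothing, since $\langle \log|x_n| \rangle < 1$ always makes that proportion equal $1 = s$. Each condition therefore holds if and only if the other does. I do not expect a genuine obstacle here: the sole piece of real content is the identity $\langle \log|x| \rangle = \log S(x)$, and the only points demanding care are the boundary bookkeeping---matching $t \in [1,10)$ with $s \in [0,1)$ and noting that $s=1$ is automatic---together with the treatment of any zero terms via the stated conventions.
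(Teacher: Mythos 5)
Your proof is correct and complete: the identity $\langle \log|x| \rangle = \log S(x)$, the resulting equality of the two counting sets, and the boundary bookkeeping (matching $t \in [1,10)$ with $s \in [0,1)$, the automatic case $s=1$, and the zero-term convention) are all handled properly. The paper itself offers no argument for Theorem~\ref{thm330}---it simply cites \cite[Theorem 4.2]{BerAH15}---and your argument is exactly the standard one underlying that reference, so there is nothing methodologically different to compare.
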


\begin{proof}
See \cite[Theorem 4.2]{BerAH15}. 
\end{proof}

\begin{example}\label{ex331}
(i) The sequence $(2^n)$ of powers of 2 is Benford.  This follows by
  Theorem \ref{thm330} and Lemma \ref{lem341} since $(\log 2^n) = (n
  \log 2)$ and since $\log 2$ is irrational. Similarly, the sequences
  $(3^n)$ and $(5^n)$ of powers of 3 and 5, respectively, are Benford.

\smallskip

\noindent
(ii) The sequence $(10^n)$ is not Benford, nor is
$\bigl(10^{n/2}\bigr) \! = \! \bigl(\sqrt{10}, 10, 10\sqrt{10},
  \ldots \bigr)$, since $\langle \log 10^{n/2} \rangle = \langle
  \frac{n}{2} \rangle = 0$ or $\frac12$ for every $n$, so $\bigl( \log
  10^{n/2} \bigr)$ is not uniformly distributed mod 1.
\end{example}

The following characterization of Benford random variables is a direct analogue of Theorem \ref{thm330}. 

\begin{theorem}\label{thm31}
A random variable $X$ is Benford if and only if the random variable
$\langle \log |X| \rangle$ is uniformly distributed on $[0, 1]$.
\end{theorem}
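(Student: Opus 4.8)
The plan is to reduce the statement to the same significand--logarithm bridge that underlies Theorem~\ref{thm330}, but applied to the distribution of $X$ rather than to limiting frequencies. Everything hinges on a single pointwise identity; once it is in hand, the Benford condition $P(S(X)\le t)=\log t$ for $t\in[1,10)$ and the uniformity condition $P(\langle\log|X|\rangle\le s)=s$ for $s\in[0,1]$ turn out to be the \emph{same} equation, read under the strictly increasing change of variable $s=\log t$ (equivalently $t=10^{s}$), which maps $[1,10)$ bijectively onto $[0,1)$.

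First I would record the identity
$$
\langle\log|x|\rangle=\log S(x)\qquad\mbox{for every }x\neq0.
$$
By Definition~\ref{def21} one may write $|x|=10^{k}S(x)$ with $k=\lfloor\log|x|\rfloor\in\Z$ and $S(x)\in[1,10)$, whence $\log|x|=k+\log S(x)$; since $S(x)\in[1,10)$ forces $\log S(x)\in[0,1)$, taking fractional parts gives the claim. Consequently, for $x\neq0$ and $t\in[1,10)$ the events $\{S(x)\le t\}$ and $\{\langle\log|x|\rangle\le\log t\}$ coincide, because $\log$ is strictly increasing and $\log t$ ranges over $[0,1)$ as $t$ ranges over $[1,10)$.

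With this in place the equivalence is almost immediate, after one discards the null event $\{X=0\}$ (dealt with below). For the forward direction, assume $X$ is Benford and fix $s\in[0,1)$; setting $t=10^{s}\in[1,10)$ and applying the identity on $\{X\neq0\}$ gives
$$
P\bigl(\langle\log|X|\rangle\le s\bigr)=P\bigl(S(X)\le 10^{s}\bigr)=\log 10^{s}=s.
$$
The remaining value $s=1$ follows by monotonicity of the distribution function, so $\langle\log|X|\rangle$ is uniform on $[0,1]$. The converse is obtained by reversing each step: given uniformity, for $t\in[1,10)$ put $s=\log t\in[0,1)$ and read the identical chain of equalities backwards to recover $P(S(X)\le t)=\log t$.

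The one genuine subtlety---the part I expect to require the most care---is the exceptional event $\{X=0\}$, on which the identity fails (there $S(X)=0$ while $\log|X|$ is set to $0$ by the standing convention $\log0=0$). I would dispatch it by showing $\{X=0\}$ is null under either hypothesis: if $X$ is Benford, taking $t=1$ in the defining relation yields $P(S(X)\le1)=\log1=0$, and since $\{X=0\}\subseteq\{S(X)\le1\}$ we get $P(X=0)=0$; if instead $\langle\log|X|\rangle$ is uniform, then $P(X=0)\le P(\langle\log|X|\rangle=0)=0$ because a uniform law has no atoms. Since $\{X=0\}\subseteq\{\langle\log|X|\rangle\le s\}$ for every $s\ge0$, this null set contributes nothing to either side, legitimizing the passage between $\{S(X)\le t\}$ and $\{\langle\log|X|\rangle\le\log t\}$. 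The rest---matching the excluded endpoint $t=10$ with $s=1$ and confirming the substitution is a bijection of $[1,10)$ onto $[0,1)$---is routine bookkeeping.
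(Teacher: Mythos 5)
Your proof is correct and complete: the key identity $\langle\log|x|\rangle=\log S(x)$, the change of variable $s=\log t$ between $[1,10)$ and $[0,1)$, and the verification that $\{X=0\}$ is null under either hypothesis together give a sound and essentially standard argument. The paper itself provides no inline proof---it only cites \cite[Theorem 4.2]{BerAH15}---and the argument given there rests on exactly this significand--logarithm correspondence, so your self-contained write-up simply supplies what the paper delegates to the reference.
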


\begin{proof}
See \cite[Theorem 4.2]{BerAH15}. 
\end{proof}

The next proposition shows that if a sequence of numbers or a random
variable are Benford, then so are the positive multiples of the
sequence or random variable, as are their powers and reciprocals.

\begin{proposition}\label{prop332}
If the sequence of numbers $(x_n)$ is Benford, and if the random
variable $X$ is Benford, then for every $a > 0$ and $0 \neq k \in \Z$, the sequence $(ax_{n}^k)$ and the random variable $(aX^k)$ are also Benford.
\end{proposition}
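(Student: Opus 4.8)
The plan is to reduce everything, via the characterizations in Theorem~\ref{thm330} and Theorem~\ref{thm31}, to statements about uniform distribution mod~$1$, and then to verify that the operations involved each preserve it. Since $\log|a x_n^k| = \log a + k \log|x_n|$ and $\log|a X^k| = \log a + k\log|X|$ (and since the density-zero set of indices with $x_n = 0$ and the probability-zero event $\{X=0\}$ are both forced by the Benford property, so contribute nothing in the limit), it suffices to show the following: if $(y_n)$ is uniformly distributed mod~$1$, with $y_n = \log|x_n|$, and $U := \langle \log|X|\rangle$ is uniform on $[0,1]$, then $(\log a + k y_n)$ is again uniformly distributed mod~$1$ and $\langle \log a + k\langle \log|X|\rangle\rangle$ is again uniform on $[0,1]$, for every $a > 0$ and every $0 \ne k \in \Z$. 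I would then isolate three building blocks: the rotation $u \mapsto \langle u + \log a\rangle$ (to absorb the factor $a$), the $k$-fold map $u \mapsto \langle ku\rangle$ for positive integers $k$ (to handle positive powers), and the reflection $u \mapsto \langle -u\rangle = 1 - u$ (to handle the reciprocal), noting that any negative exponent factors as $X^k = (X^{-1})^{|k|}$.

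The rotation and reflection are straightforward. For the shift by $\log a$, in both settings one splits $[0,1)$ into the two subintervals on which $\langle u + \langle \log a\rangle\rangle$ equals $u + \langle\log a\rangle$ and $u + \langle\log a\rangle - 1$, respectively; summing the two resulting proportions (resp.\ probabilities) recovers $s$ for each $s \in [0,1]$. For the reflection one has $\langle -u\rangle = 1 - u$ for $u \in (0,1)$, and $1 - U$ is again uniform on $[0,1]$, while $(1 - \langle y_n\rangle)$ is again uniformly distributed mod~$1$ because the proportion of terms with $1 - \langle y_n\rangle \le s$ equals the proportion with $\langle y_n\rangle \ge 1 - s$, which tends to $s$. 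This already settles the reciprocal case $k = -1$.

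The key step, and the one I expect to be the main obstacle, is showing that $u \mapsto \langle ku\rangle$ preserves uniform distribution mod~$1$ for a positive integer $k$. The clean argument is a $k$-fold covering: for $s \in [0,1]$ one has $\langle ku\rangle \le s$ precisely when $\langle u\rangle$ lies in $\bigcup_{j=0}^{k-1} \bigl[\tfrac{j}{k}, \tfrac{j+s}{k}\bigr]$, a union of $k$ intervals each of length $s/k$. In the random-variable case this yields at once
\[
P\bigl(\langle k\langle \log|X|\rangle\rangle \le s\bigr) = \sum_{j=0}^{k-1} P\Bigl(\tfrac{j}{k} \le U \le \tfrac{j+s}{k}\Bigr) = \sum_{j=0}^{k-1}\frac{s}{k} = s,
\]
and in the sequence case the corresponding count of indices, divided by $N$, converges to $k\cdot(s/k) = s$ by applying the hypothesis on $(y_n)$ to each of the $k$ intervals. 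The point requiring care is that multiplication by a \emph{non}integer factor would in general destroy equidistribution, so the argument genuinely uses $k \in \Z$; the hypothesis also absorbs the finitely many boundary points and the discarded zero-terms. Composing the three building blocks then gives the result for arbitrary $a > 0$ and $0 \ne k \in \Z$. I would mention in passing that Weyl's criterion furnishes a one-line alternative for the multiplication step, but it relies on the complex exponentials that this primer deliberately avoids, so I prefer the elementary covering argument above.
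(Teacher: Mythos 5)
Your proof is correct, and each step checks out: the reduction via Theorems~\ref{thm330} and~\ref{thm31} is legitimate (the Benford property does force the zero terms to have density zero, resp.\ $P(X=0)=0$ --- take $t=1$ in Definitions~\ref{def28} and~\ref{def25}, noting $S(0)=0\le 1$ and $\log 1 = 0$); the shift and reflection steps are routine; and the covering identity $\langle ku\rangle\le s \Longleftrightarrow \langle u\rangle \in \bigcup_{j=0}^{k-1}\bigl[\tfrac{j}{k},\tfrac{j+s}{k}\bigr]$ correctly delivers the positive-integer-power case, with the closed-interval endpoints harmless because they carry zero limiting density (resp.\ zero probability), as you note. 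The comparison with the paper is necessarily lopsided, because the paper does not prove this proposition at all: its ``proof'' is the single line ``Special case of \cite[Theorem 4.4]{BerAH15}'', in keeping with the primer's announced policy of citing rather than proving. What you have done differently, then, is to supply a self-contained argument built only from ingredients the paper itself states. Your decomposition into rotation, reflection, and $k$-fold covering is the natural route that the cited monograph's treatment also follows in outline, but where a textbook proof of the invariance of uniform distribution mod~$1$ under $u\mapsto ku+c$ ($0\ne k\in\Z$) would typically invoke Weyl's criterion and exponential sums, your covering argument stays entirely within elementary counting and measure computations --- exactly the level of machinery this primer allows itself, a point you rightly flag. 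The one detail worth stating explicitly: for $k<0$, terms with $x_n=0$ make $ax_n^k$ undefined, so you should declare the (density-zero many) such terms discarded or set to $0$ by convention; with that said, your argument is complete.
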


\begin{proof}
Special case of  \cite[Theorem 4.4]{BerAH15}. 
\end{proof}

\begin{example}\label{ex333}
(i) Since $(2^n)$ is Benford, the sequences 
$(4^n)\! = \! (4, 16, 64, \ldots)$, $(2^{-n})  \! =\! (\frac{1}{2},
\frac{1}{4}, \frac{1}{8}, \ldots)$, and $(2^n \pi) = (2 \pi, 4 \pi, 8 \pi, \ldots)$ are also Benford.

\smallskip

\noindent
(ii) Since $X=10^U$ is Benford, so are $X^2=100^U, 1/X = 10^{-U}$,
  and $\pi X = \pi 10^U$.
\end{example}

The next theorem says that if a Benford random
variable is multiplied by any positive constant, e.g., as a result of
changing units of measurement, then the significant digit
probabilities will not change. In fact Benford random variables are
the only random variables with this property. Recall that two random
variables $X$ and $Y$ are {\em identically distributed\/} if $P(X\le
t) = P(Y \le t)$ for all $t\in \R$.

\begin{definition}\label{def33}
A random variable $X$ has {\em scale-invariant significant digits\/} if 
$S(X)$ and $S(aX)$ are identically distributed for all $a\in \R^+$.
\end{definition}

\begin{example}\label{ex355}
Let $U$ be uniformly distributed on $[0,1]$.

\smallskip

\noindent
(i) $U$ does not have scale-invariant digits since, for example,
  $P(S(U) \leq 2) = \frac19$ but $P(S(2U) \leq 2) = \frac59$.

\smallskip

\noindent
(ii) As is easy to check directly, or follows immediately from the next theorem and Example \ref{ex354} above, the random variable $X=10^U$ has scale-invariant significant digits.
\end{example}

\begin{theorem}\label{thm34}
A random variable $X$ with $P(X=0)=0$ is Benford if and only if it has scale-invariant significant digits.
\end{theorem}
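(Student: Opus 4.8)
The plan is to reduce everything to the fractional part of the logarithm, where Theorem~\ref{thm31} already identifies being Benford with $\langle \log|X|\rangle$ being uniform on $[0,1]$. Since $P(X=0)=0$, the variable $Y := \langle \log|X|\rangle$ is defined almost surely and $S(X)=10^{Y}$; hence for $t\in[1,10)$ the event $\{S(X)\le t\}$ equals $\{Y\le \log t\}$, and $t\mapsto \log t$ maps $[1,10)$ bijectively onto $[0,1)$. Because $u\mapsto 10^{u}$ is a strictly increasing bijection of $[0,1)$ onto $[1,10)$, the variables $S(X)$ and $S(aX)$ are identically distributed if and only if $Y$ and $\langle\log|aX|\rangle$ are. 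Using $\log|aX| = \log a + \log|X|$ and writing $s=\langle\log a\rangle$, one has $\langle\log|aX|\rangle = \langle s + Y\rangle$, so scale invariance is precisely the assertion that $\langle s+Y\rangle$ has the same distribution as $Y$ for every admissible $s$. As $a$ runs through $\R^+$ the value $s=\langle\log a\rangle$ runs through all of $[0,1)$ (take $a=10^{s}$), so the whole problem becomes: \emph{a distribution on the circle $[0,1)$ is invariant under every rotation $Y\mapsto\langle s+Y\rangle$ if and only if it is uniform.}

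The easy direction is immediate: if $Y$ is uniform on $[0,1]$ then so is any rotation $\langle s+Y\rangle$, which shows that a Benford $X$ has scale-invariant significant digits.

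For the converse I would argue purely in terms of $g(u):=P(Y<u)$ for $u\in[0,1]$, avoiding measure-theoretic machinery. Rotation invariance says that the probability $Y$ lands in an arc depends only on its length, not on its position; in particular, for $u_1,u_2\ge 0$ with $u_1+u_2\le 1$ the arc $[u_1,u_1+u_2)$, having length $u_2$, carries probability $g(u_2)$. Splitting $[0,u_1+u_2)$ into $[0,u_1)$ and $[u_1,u_1+u_2)$ then yields the Cauchy equation $g(u_1+u_2)=g(u_1)+g(u_2)$. Since $g$ is nondecreasing and $g(1)=1$, the only solution is $g(u)=u$, i.e.\ $Y$ is uniform on $[0,1]$. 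Applying Theorem~\ref{thm31} once more converts this back into the statement that $X$ is Benford, completing the proof.

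The main obstacle is this last step---deducing uniformity from rotation invariance---carried out without Fourier coefficients or Lebesgue theory. The device that keeps it elementary is recasting the invariance as a position-independent interval-length function and invoking the monotone solution of Cauchy's functional equation; the only points needing care are the bookkeeping of half-open arcs modulo $1$ and the observation that monotonicity (rather than continuity, which one does not yet know) already forces $g$ to be the identity.
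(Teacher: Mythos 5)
Your proof is correct. Note that the paper itself contains no argument for Theorem~\ref{thm34}; its ``proof'' is the citation \cite[Theorem 5.3]{BerAH15}, so the relevant comparison is with the proof given there. Your first step---using Theorem~\ref{thm31} to translate scale invariance of $S(X)$ into invariance of the law of $Y=\langle\log|X|\rangle$ under every rotation $Y\mapsto\langle s+Y\rangle$ of the circle $[0,1)$---is the same reduction the reference makes. Where you genuinely depart is at the crux, namely that a rotation-invariant distribution on the circle must be uniform: the argument in \cite{BerAH15} settles this with Fourier coefficients (invariance under a single rotation by an irrational $s$ forces all nonzero Fourier coefficients to vanish), which is stronger---it shows scale invariance under one $a$ with $\log a$ irrational already suffices, a theme echoed in Theorem~\ref{thm342}---but uses exactly the kind of machinery this survey avoids. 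Your route instead exploits the full family of rotations, which your setup supplies for free, to show that the probability of an arc $[v,v+u)$ depends only on its length $u$; splitting $[0,u_1+u_2)$ at $u_1$ then gives the Cauchy equation $g(u_1+u_2)=g(u_1)+g(u_2)$ for the nondecreasing function $g(u)=P(Y<u)$ with $g(1)=1$, and additivity on the rationals plus monotonicity forces $g(u)=u$. The two points you flag as needing care are indeed the only ones: the identification $\{\langle s+Y\rangle\in[0,u)\}=\{Y\in[v,v+u)\}$ with $v=\langle 1-s\rangle$ when $v+u\le 1$, and the fact that monotone solutions of Cauchy's equation on the restricted domain $u_1+u_2\le 1$ are linear; both are routine. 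In short, your proof is complete and more elementary than the cited one, at the price of consuming the entire family of scalings rather than a single well-chosen one.
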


\begin{proof}
See \cite[Theorem 5.3]{BerAH15}. 
\end{proof}

\begin{example}\label{exa35}
By Theorem \ref{thm34} and Example \ref{ex354} above,  if $U$ is
uniformly distributed on $[0, 1]$, then for every $a > 0$ the random variable $aU$
is not Benford, whereas the random variable $a10^U$ is Benford. \end{example}

In fact, a much weaker form of scale-invariance characterizes Benford's law completely, namely, scale-invariance of any single first digit.

\begin{theorem}\label{thm342}
A random variable $X$ with $P(X=0)=0$ is Benford if and only if for some $d \in \{1, 2, \ldots, 9\}$, 
$$
P(D_1(aX) = d) = P(D_1(X) = d) 
\quad \mbox {\rm for all } a \in \R^+ .
$$
\end{theorem}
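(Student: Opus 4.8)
The plan is to reduce everything to Theorem~\ref{thm31} and then to an elementary Fourier argument on the circle $\R/\Z$. One direction is immediate: if $X$ is Benford then, by Theorem~\ref{thm34}, $S(X)$ and $S(aX)$ are identically distributed for every $a\in\R^+$, so in particular $P(D_1(aX)=d)=P(D_1(X)=d)$ for \emph{every} $d$ and every $a$. For the substantive converse, write $V=\langle\log|X|\rangle$, which is well defined almost surely because $P(X=0)=0$; by Theorem~\ref{thm31} it suffices to show that $V$ is uniformly distributed on $[0,1)$. Since $S(y)=10^{\langle\log|y|\rangle}$, the event $\{D_1(aX)=d\}$ is exactly $\{\langle\log a+V\rangle\in I_d\}$, where $I_d=[\log d,\log(d+1))$ has length $\log(1+\tfrac1d)$; as $a$ ranges over $\R^+$ the shift $s=\log a$ ranges over all of $\R$. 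Thus the hypothesis says precisely that the function $f(s)=P(\langle V+s\rangle\in I_d)$ is constant in $s$.

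The next step is to read off what constancy of $f$ means for the distribution of $V$. Introduce the Fourier coefficients $c_k=E\bigl[e^{2\pi i kV}\bigr]$, $k\in\Z$; the target statement that $V$ is uniform is equivalent to $c_k=0$ for all $k\neq0$. A short computation, interchanging the order of integration, gives for each $k$ the $k$-th Fourier coefficient of the periodic function $f$ as
$$
\int_0^1 f(s)e^{-2\pi i ks}\,ds=c_k\int_{I_d}e^{-2\pi i ku}\,du.
$$
Since $f$ is constant, its Fourier coefficients vanish for every $k\neq0$, so $c_k\,\widehat{\mathbf{1}}_{I_d}(k)=0$ for all $k\neq0$, where $\widehat{\mathbf{1}}_{I_d}(k)=\int_{I_d}e^{-2\pi i ku}\,du$.

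The main obstacle --- and the reason a \emph{single} digit suffices --- is to show that $\widehat{\mathbf{1}}_{I_d}(k)\neq0$ for every $k\neq0$, which then forces $c_k=0$ for all $k\neq0$ and completes the argument. For $k\neq0$ one has $\widehat{\mathbf{1}}_{I_d}(k)=0$ if and only if $e^{-2\pi i k\log d}=e^{-2\pi i k\log(d+1)}$, i.e.\ if and only if $k\log(1+\tfrac1d)\in\Z$. Hence everything comes down to the number-theoretic fact that $\log(1+\tfrac1d)=\log_{10}\frac{d+1}{d}$ is \emph{irrational} for every $d\in\{1,2,\ldots,9\}$: were it equal to some $p/q$ with $q\in\N$, we would have $(d+1)^q=10^p\,d^q$, which is impossible by unique factorization because $d$ and $d+1$ are coprime. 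With this irrationality in hand, $\widehat{\mathbf{1}}_{I_d}(k)\neq0$ for all $k\neq0$, so $c_k=0$ for all $k\neq0$, $V$ is uniform, and $X$ is Benford by Theorem~\ref{thm31}. I expect the irrationality of $\log(1+1/d)$ to be the only genuinely delicate point; the Fourier bookkeeping, though it formally uses complex exponentials, is routine and could be replaced by a more elementary equidistribution argument if one wishes to stay within the spirit of the primer.
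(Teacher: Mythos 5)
Your proof is correct: the reduction via Theorem~\ref{thm31} to uniformity of $\langle\log|X|\rangle$ modulo one, the Fourier-coefficient identity $\hat{f}(k)=c_k\,\widehat{\mathbf{1}}_{I_d}(k)$, and the irrationality of $\log(1+\tfrac1d)$ (which is precisely what makes a \emph{single} digit suffice) are all sound, and the forward direction via Theorem~\ref{thm34} is fine. The paper itself offers no proof --- it only cites \cite[Theorem 5.8]{BerAH15} --- and the argument given in that reference is essentially this same Fourier-analytic one, so your proposal matches the intended approach.
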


\begin{proof}
See \cite[Theorem 5.8]{BerAH15}. 
\end{proof}

\begin{example}\label{ex360}
If $X$ is a positive random variable, and the probability that the first
significant digit of $aX$ equals 3 is the same for all $a\in \R^+$, then $X$ is Benford.
\end{example}

A notion parallel to that of scale-invariance is the notion of {\em
  base-invariance}, one interpretation of which says that the
distribution of the significant digits remains unchanged if the base
is changed from 10 to, say, 100.

\begin{definition}\label{def36}
A random variable $X$ has {\em base-invariant significant digits\/} if
$S(X)$ and $S(X^n)$ are identically distributed for all $n\in \N$.
\end{definition}

\begin{example}\label{ex356}
Let $U$ be uniformly distributed on $[0,1]$.

\smallskip

\noindent
(i) A short calculation (e.g., see \cite[Example 5.11(iii)]{BerAH15}) shows that $U$ does not have base-invariant significant digits.

\smallskip

\noindent
(ii) A random variable $Y$ with $P(S(Y) = 1) = 1$ clearly has
  base-invariant  significant digits, as does any Benford random
  variable, which follows by a short calculation; see \cite[Example 5.11(ii)]{BerAH15}.
\end{example}

As seen in the last example, random variables whose significand equals
$1$ with probability one, and Benford random variables both have
base-invariant significant digits. In fact, as the next theorem shows, averages of these two distributions are the only such random variables.

\begin{theorem}\label{thm38}
A random variable $Z$ with $P(Z = 0) = 0$ has base-invariant significant digits if and
only if $Z = (1-q)X + qY$ for some $q \in [0,1]$, 
where $X$ is Benford and $P(S(Y)=1)=1$.
\end{theorem}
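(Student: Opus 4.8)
The plan is to transfer the entire problem to the circle $\mathbb{T} = \R/\Z$ (identified with $[0,1)$, addition mod $1$) via the logarithm, exactly as in Theorem \ref{thm31}. Since $P(Z=0)=0$, the quantity $W = \langle \log|Z| \rangle$ is well-defined almost surely; let $\mu$ denote its distribution on $\mathbb{T}$. Because $S(Z) = 10^{W}$ and $w \mapsto 10^{w}$ is a bijection of $[0,1)$ onto $[1,10)$, the law of $S(Z)$ is determined by $\mu$, and two significands are identically distributed if and only if the corresponding laws on $\mathbb{T}$ coincide. For $n \in \N$ one has $\log|Z^{n}| = n\log|Z|$, hence $\langle \log|Z^{n}|\rangle = \langle nW\rangle = T_{n}(W)$, where $T_{n}\colon \mathbb{T}\to\mathbb{T}$ is multiplication by $n$. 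Thus $Z$ has base-invariant significant digits if and only if $(T_{n})_{*}\mu = \mu$ for every $n \in \N$. Under this dictionary, $X$ Benford corresponds (by Theorem \ref{thm31}) to $\mu = \lambda$, Lebesgue measure, while $P(S(Y)=1)=1$ corresponds to $\mu = \delta_{0}$, the point mass at $0$; and the asserted representation $Z=(1-q)X+qY$ is the statement that $\mu = (1-q)\lambda + q\,\delta_{0}$, i.e. that the law of $Z$ is the corresponding mixture.

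The ``if'' direction is then immediate: $\lambda$ is $T_{n}$-invariant because multiplication by $n$ is an $n$-to-one measure-preserving map of $\mathbb{T}$, and $\delta_{0}$ is $T_{n}$-invariant because $T_{n}(0)=0$; invariance passes to convex combinations, so every $\mu = (1-q)\lambda + q\,\delta_{0}$ is $T_{n}$-invariant for all $n$. (Equivalently one may quote Example \ref{ex356}(ii) for the two extreme cases and note that the significand law of a mixture is the mixture of the significand laws.) The whole content of the theorem is therefore the ``only if'' direction, which amounts to the rigidity statement that the only Borel probability measures on $\mathbb{T}$ invariant under every $T_{n}$ are the measures $(1-q)\lambda + q\,\delta_{0}$, $q \in [0,1]$.

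To prove this rigidity I would pass to Fourier coefficients $\hat\mu(k) = \int_{\mathbb{T}} e^{-2\pi i k x}\,d\mu(x)$, $k \in \Z$. A direct computation gives $\widehat{(T_{n})_{*}\mu}(k) = \hat\mu(nk)$, so invariance under all $T_{n}$ is equivalent to
$$
\hat\mu(nk) = \hat\mu(k) \qquad \text{for all } n \in \N,\ k \in \Z .
$$
For positive integers $k,k'$, applying this twice yields $\hat\mu(k)=\hat\mu(kk')=\hat\mu(k')$, so every coefficient of positive index equals a common value $c := \hat\mu(1)$; those of negative index equal $\bar c$ (since $\hat\mu(-k)=\overline{\hat\mu(k)}$), and $\hat\mu(0)=1$. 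I would then pin down $c$ using two standard facts about Fourier coefficients of measures. First, the atom at $x_{0}$ is the Cesàro limit $\mu(\{x_{0}\}) = \lim_{N\to\infty}\frac{1}{2N+1}\sum_{|k|\le N}\hat\mu(k)e^{2\pi i k x_{0}}$; this gives $\mu(\{0\}) = \mathrm{Re}\,c =: q \in [0,1]$, while $\mu(\{x_{0}\}) = 0$ for every $x_{0}\neq 0$ because the relevant exponential sums stay bounded. Second, Wiener's lemma gives $\sum_{x}\mu(\{x\})^{2} = \lim_{N\to\infty}\frac{1}{2N+1}\sum_{|k|\le N}|\hat\mu(k)|^{2} = |c|^{2}$; as the only atom sits at $0$, the left side equals $q^{2}$, forcing $|c|^{2}=q^{2}=(\mathrm{Re}\,c)^{2}$ and hence $\mathrm{Im}\,c = 0$. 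Thus $c=q$ is real with $q\in[0,1]$, the coefficients of $\mu$ coincide with those of $(1-q)\lambda + q\,\delta_{0}$, and uniqueness of Fourier coefficients completes the identification.

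Translating back through the dictionary of the first paragraph finishes the proof: writing $\mu = (1-q)\lambda + q\,\delta_{0}$ exhibits the law of $Z$ as the mixture $(1-q)\mu_{X} + q\,\mu_{Y}$ with $X$ Benford and $P(S(Y)=1)=1$. I expect the main obstacle to be precisely this rigidity step—ruling out every invariant measure other than the mixtures of $\lambda$ and $\delta_{0}$. The subtlety is that invariance under a single $T_{n}$ (say the doubling map) permits an enormous variety of invariant measures; it is the simultaneous invariance under \emph{all} $n$ that collapses the possibilities, and the delicate point is to exclude exotic singular-continuous or off-origin atomic components. The Fourier-plus-Wiener argument above is the cleanest route I know, with the identity $\sum_{x}\mu(\{x\})^{2} = |c|^{2}$ doing the crucial work of forcing $c$ to be real and nonnegative.
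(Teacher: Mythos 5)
Your proof is correct, but note that the paper itself contains no proof of Theorem \ref{thm38}: its ``proof'' is just the citation \cite[Theorem 5.13]{BerAH15}, so the only meaningful comparison is with that reference. Your route is essentially the standard argument given there (going back to Hill's 1995 base-invariance theorem): conjugate by $\log$, via Theorem \ref{thm31}, so that base-invariance of significant digits becomes invariance of the law $\mu$ of $\langle \log|Z|\rangle$ under every map $x \mapsto \langle nx\rangle$ of $[0,1)$; deduce $\hat\mu(k)=\hat\mu(1)=:c$ for all $k\ge 1$; and identify $\mu=(1-q)\lambda+q\,\delta_0$ by uniqueness of Fourier coefficients of measures. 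Two remarks. First, you correctly read ``$Z=(1-q)X+qY$'' as a mixture of significand distributions rather than a pointwise convex combination of random variables; that is indeed the intended meaning of the (slightly abusive) notation, and your dictionary $\lambda \leftrightarrow$ Benford, $\delta_0 \leftrightarrow P(S(Y)=1)=1$ handles it properly. Second, your two-sided Ces\`aro average only yields $\mathrm{Re}\,c=\mu(\{0\})$, which is why you then need Wiener's lemma to force $\mathrm{Im}\,c=0$; this can be streamlined. The one-sided average $\frac1N\sum_{k=1}^{N}\hat\mu(k)=\int_{[0,1)}\frac1N\sum_{k=1}^{N}e^{-2\pi i kx}\,{\rm d}\mu(x)$ converges to $\mu(\{0\})$ by bounded convergence, while its value is identically $c$; hence $c=\mu(\{0\})\in[0,1]$ is automatically real and nonnegative, and Wiener's lemma can be dispensed with entirely. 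Either way, the rigidity step you flagged as the heart of the matter is exactly where the content lies, and your treatment of it is sound.
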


\begin{proof}
See \cite[Theorem 5.13]{BerAH15}. 
\end{proof}

\begin{theorem}\label{cor343}
If a random variable has scale-invariant significant digits then it has base-invariant significant digits.
\end{theorem}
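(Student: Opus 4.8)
The plan is to chain together the two characterizations already in hand: scale-invariance forces a random variable to be Benford (Theorem \ref{thm34}), and every Benford random variable has base-invariant significant digits (Example \ref{ex356}(ii), equivalently the ``if'' direction of Theorem \ref{thm38} with $q=0$). The one wrinkle is that both of those statements carry the hypothesis $P(X=0)=0$, whereas the present statement does not; so the first thing I would do is dispose of a possible atom at the origin.

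Suppose $X$ has scale-invariant significant digits and set $p=P(X=0)$. If $p=1$, then $X=0$ and $X^n=0$ almost surely, so $S(X)=S(X^n)=0$ with probability one and base-invariance is immediate. Assume then $p<1$ and let $X'$ denote $X$ conditioned on $\{X\neq 0\}$, so that $P(X'=0)=0$. Since $aX=0$ if and only if $X=0$ for every $a>0$, and $S(0)=0$, the law of $S(aX)$ decomposes as an atom of mass $p$ at $0$ plus $(1-p)$ times the law of $S(aX')$, which is supported on $[1,10)$; the same holds for $S(X)$ with $a=1$. Scale-invariance of $X$ then cancels the common atom $p\,\delta_0$ and forces $S(X')$ and $S(aX')$ to be identically distributed for all $a>0$, i.e.\ $X'$ inherits scale-invariant significant digits.

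Now Theorem \ref{thm34} applies to $X'$ and shows $X'$ is Benford, whence by Example \ref{ex356}(ii) the random variables $S(X')$ and $S\bigl((X')^n\bigr)$ are identically distributed for every $n\in\N$. (Alternatively, Proposition \ref{prop332} gives directly that each power $(X')^n$ is again Benford and so carries the Benford significand law.) Reassembling the atom then finishes the argument: because $X^n=0$ if and only if $X=0$, the law of $S(X^n)$ decomposes as mass $p$ at $0$ plus $(1-p)$ times the law of $S\bigl((X')^n\bigr)$, while the law of $S(X)$ is mass $p$ at $0$ plus $(1-p)$ times the law of $S(X')$; the two non-atomic parts coincide by the previous sentence, so $S(X)$ and $S(X^n)$ are identically distributed for every $n$, which is exactly base-invariance.

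The argument is therefore just a composition of Theorem \ref{thm34} with Example \ref{ex356}(ii) (equivalently Theorem \ref{thm38}), and I do not expect any deep obstacle. The only step that genuinely needs care is the bookkeeping at the origin: one must verify both that scale-invariance descends to the conditional law of $X'$ and that the atom at $0$ is transported consistently by $x\mapsto ax$ and by $x\mapsto x^n$. Once that is checked, the core implication ``scale-invariant $\Rightarrow$ Benford $\Rightarrow$ base-invariant'' does all the work.
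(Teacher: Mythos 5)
Your proposal is correct and takes essentially the same route as the paper, whose entire proof is that the claim ``follows immediately from Theorems \ref{thm34} and \ref{thm38}'': scale-invariance forces the variable to be Benford, and a Benford variable is precisely the $q=0$ case of the base-invariance characterization. Your additional bookkeeping for a possible atom at $0$ is sound and in fact slightly more careful than the paper's one-line proof, since the theorem as stated omits the hypothesis $P(X=0)=0$ that both cited results carry.
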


\begin{proof}
Follows immediately from Theorems \ref{thm34} and  \ref{thm38}.  
\end{proof}

A consequence of Theorem \ref{thm38} is that there are many
base-invariant random variables that are {\em not} Benford, but as the
next corollary shows, all continuous random variables that are base-invariant are also Benford.
Recall that a random variable $X$ is {\em continuous\/} if there exists a
function $f_X:\R \rightarrow [0, \infty)$, the {\em density
  function\/} of $X$, such that 
$$
P(X \leq t) = \int_{- \infty}^t f_X(x)\, {\rm d} x \quad \mbox {\rm for all  } t \in \R.
$$
As the reader may notice, such a random variable $X$ is often called
{\em absolutely continuous\/} in advanced texts, whereas the term
{\em continuous\/} refers to the (weaker) property that $P(X = t) =0$
for all $t\in \R$. In keeping with the elementary nature of this
article, random variables that have the latter property but not the
former (such as, e.g., Cantor random variables \cite[Example
8.9]{BerAH15}) are not considered here, and {\em continuous\/} means
{\em absolutely continuous\/} throughout. Many of the most common and
useful random variables are continuous, including uniform, normal, and
exponential random variables. Every Benford random variable is continuous.

\begin{corollary}\label{cor39}
A continuous random variable is Benford if and only if it has base-invariant significant digits.
\end{corollary}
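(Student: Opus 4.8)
The plan is to derive both directions from the structural description of base-invariant random variables in Theorem \ref{thm38}, together with the fact (noted just above) that every Benford random variable is continuous.

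The forward implication is essentially free. If a continuous random variable is Benford, then by Example \ref{ex356}(ii) it has base-invariant significant digits, so nothing further is required.

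For the converse, I would begin with a continuous random variable $Z$ having base-invariant significant digits. Continuity gives $P(Z=0)=0$, so Theorem \ref{thm38} applies and produces a mixture representation $Z=(1-q)X+qY$ with $q\in[0,1]$, $X$ Benford, and $P(S(Y)=1)=1$; here the distribution of $Z$ is the convex combination $P(Z\in A)=(1-q)P(X\in A)+qP(Y\in A)$. The goal is to show $q=0$, for then $Z=X$ is Benford. The condition $P(S(Y)=1)=1$ forces $Y$ to take values almost surely in the countable set $\{\pm 10^{k}:k\in\Z\}$, and a probability measure concentrated on a countable set must assign positive mass to at least one point, say $P(Y=t_{0})>0$. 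Since $X$ is Benford and hence continuous, $P(X=t_{0})=0$, so the mixture gives $P(Z=t_{0})=q\,P(Y=t_{0})$. Were $q>0$, this point mass would be strictly positive, contradicting the continuity of $Z$; hence $q=0$ and $Z$ is Benford.

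The single step that does any work is the identification of the non-Benford component $Y$ as a purely atomic random variable: because its significand equals $1$ almost surely, it lives on the countable grid $\{\pm 10^{k}\}$ and therefore has an atom. I expect this to be the only real obstacle, and it is a mild one---everything else is a direct application of Theorem \ref{thm38} and the continuity of Benford random variables.
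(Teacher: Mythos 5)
Your proof is correct and follows essentially the route the paper intends: the corollary is presented there as a direct consequence of Theorem \ref{thm38}, with the mixture's atomic component $Y$ (supported on the countable set $\{\pm 10^{k}:k\in\Z\}$) being incompatible with continuity of $Z$ unless $q=0$, exactly as you argue. Your write-up simply makes explicit the details the paper leaves to the reader, including the correct reading of $Z=(1-q)X+qY$ as a convex combination of distributions.
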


The final theorem in this section illustrates one of the key
``attracting'' properties of Benford random variables, namely, if any
random variable is multiplied by an independent Benford random
variable, then the product is Benford.

\begin{theorem}\label{thm310}
Let $X, Y$ be independent random variables with ${P(XY \! = \! 0)=0}$.
If either $X$ or $Y$ is Benford, then the product $XY$ is also Benford.
\end{theorem}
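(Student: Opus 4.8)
The plan is to reduce the statement to the uniform-distribution characterization in Theorem~\ref{thm31} and then to establish the key fact that adding to an independent uniform variable, modulo one, again yields a uniform variable. By the symmetry of the product $XY$ in $X$ and $Y$, I may assume without loss of generality that $X$ is Benford. Since $P(XY=0)=0$ and $\{XY=0\}=\{X=0\}\cup\{Y=0\}$, both $X$ and $Y$ are nonzero with probability one, so $\log|X|$, $\log|Y|$, and $\log|XY|=\log|X|+\log|Y|$ are all defined almost surely.

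Next I would set $U=\langle\log|X|\rangle$ and $V=\log|Y|$. Because $X$ is Benford, Theorem~\ref{thm31} tells me that $U$ is uniformly distributed on $[0,1]$; because $X$ and $Y$ are independent, so are $U$ and $V$. Using the elementary identity $\langle a+b\rangle=\langle\langle a\rangle+b\rangle$ (valid since $\lfloor a\rfloor\in\Z$), I get
\[
\langle\log|XY|\rangle=\langle\log|X|+\log|Y|\rangle=\langle U+V\rangle .
\]
By Theorem~\ref{thm31} applied to $XY$, it therefore suffices to show that $\langle U+V\rangle$ is uniformly distributed on $[0,1]$.

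The crux is the following claim: if $U$ is uniform on $[0,1]$ and $V$ is any random variable independent of $U$, then $\langle U+V\rangle$ is uniform on $[0,1]$. To prove it I would fix $s\in[0,1)$ and condition on the value of $V$. For each fixed real number $v$, writing $w=\langle v\rangle$, one checks directly that the set $\{u\in[0,1):\langle u+w\rangle\le s\}$ is a union of at most two subintervals of $[0,1)$ of total length exactly $s$; equivalently, the map $u\mapsto\langle u+v\rangle$ is a rotation of the circle $\R/\Z$ and hence preserves length. Consequently $P(\langle U+v\rangle\le s)=s$ for every $v$, and averaging this conditional probability over the distribution of $V$, using the independence of $U$ and $V$, gives $P(\langle U+V\rangle\le s)=s$. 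Since $s\in[0,1)$ was arbitrary, $\langle U+V\rangle$ is uniform on $[0,1]$, and by Theorem~\ref{thm31} the product $XY$ is Benford.

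The step I expect to be the main obstacle is the averaging over $V$ in the last paragraph: the clean conditional argument is genuinely a statement about the joint distribution of $(U,V)$, and turning ``average the conditional probability'' into a rigorous computation is exactly where one normally invokes Fubini's theorem. Keeping within the elementary, measure-theory-free spirit of this primer, one would instead phrase it as the law of total probability for the independent pair $(U,V)$, noting that the conditional probability equals the constant $s$ for \emph{every} value of $v$, so no integration is actually needed beyond observing that a constant averages to itself. A worthwhile alternative route, avoiding the mod-one lemma altogether, is to condition on $Y$ and use scale-invariance (Theorem~\ref{thm34}): given $Y=y\neq0$ one has $S(XY)=S(yX)=S(|y|X)$, which has the same distribution as $S(X)$, so $P(S(XY)\le t\mid Y=y)=\log t$ for almost every $y$, whence $XY$ is Benford.
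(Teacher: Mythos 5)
Your proof is correct. Note that for this theorem the paper itself offers no argument at all: its ``proof'' is simply the citation \cite[Theorem 8.12]{BerAH15}, so there is nothing in-paper to compare against, and what you have written is essentially the standard argument underlying that cited result. Your reduction is clean: the symmetry-based WLOG, the observation that $P(XY=0)=0$ forces $P(X=0)=P(Y=0)=0$, the identity $\langle a+b\rangle=\langle\langle a\rangle+b\rangle$, and the two applications of Theorem~\ref{thm31} (forward for $X$, backward for $XY$) are all handled correctly. The crux --- that $u\mapsto\langle u+v\rangle$ is a length-preserving rotation of $[0,1)$, so $\langle U+V\rangle$ is uniform whenever $U$ is uniform on $[0,1]$ and independent of $V$ --- is exactly the right lemma, and your closing caveat correctly isolates the one place where genuine measure theory enters (law of total probability/Fubini over the distribution of $V$); that step is unavoidable but routine, and your phrasing of it is acceptable for the primer's elementary register. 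Your alternative route, conditioning on $Y=y$ and invoking scale invariance (Theorem~\ref{thm34}) to get $P(S(XY)\le t\mid Y=y)=P\bigl(S(|y|X)\le t\bigr)=\log t$, is equally valid and arguably fits the spirit of this survey even better, since it uses a theorem the paper highlights; but be aware the two routes are nearly the same argument in disguise, because scale invariance of Benford variables is itself established via the rotation-invariance of the uniform distribution mod $1$.
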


\begin{proof}
See \cite[Theorem 8.12]{BerAH15}. 
\end{proof}

\begin{corollary}\label{cor311}
Let $X_1, X_2, \ldots$ be independent positive random variables.
If $X_j$ is Benford for some $j \in \N$, then the product $X_1X_2 \cdots X_m$  is Benford for all $m \geq j$.
\end{corollary}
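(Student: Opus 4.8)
The plan is to proceed by induction on $m$, using Theorem \ref{thm310} as the sole analytic engine and the independence structure of the $X_i$ as the bookkeeping. The key observation throughout is that, since each $X_i$ is positive, every partial product $X_1 X_2 \cdots X_k$ is again positive, so $P(X_1 X_2 \cdots X_k = 0) = 0$ automatically; hence the hypothesis $P(XY = 0) = 0$ demanded by Theorem \ref{thm310} is satisfied at every step without further comment.

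For the base case $m = j$ I would first note that if $j = 1$ there is nothing to prove, as $X_1$ is Benford by assumption. For $j \geq 2$, write $X_1 X_2 \cdots X_j = W \cdot X_j$, where $W = X_1 X_2 \cdots X_{j-1}$. Since $W$ is a function of $X_1, \ldots, X_{j-1}$, and these are independent of $X_j$, the two factors $W$ and $X_j$ are independent, and both are positive. As $X_j$ is Benford, Theorem \ref{thm310} (with $X_j$ in the role of the Benford factor) yields that $W X_j = X_1 X_2 \cdots X_j$ is Benford.

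For the inductive step I would assume $X_1 X_2 \cdots X_m$ is Benford for some $m \geq j$ and consider $X_1 X_2 \cdots X_{m+1} = (X_1 X_2 \cdots X_m) \cdot X_{m+1}$. The first factor is Benford by the inductive hypothesis, and it is a function of $X_1, \ldots, X_m$, hence independent of $X_{m+1}$; both factors are positive. Applying Theorem \ref{thm310} once more, now with the partial product playing the role of the Benford factor, shows that $X_1 X_2 \cdots X_{m+1}$ is Benford, which completes the induction and establishes the claim for all $m \geq j$.

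The corollary is not deep, and the only step meriting any care is the assertion that the partial product $X_1 X_2 \cdots X_m$ is independent of $X_{m+1}$. This is the standard fact that a measurable function of one subcollection of jointly independent random variables is independent of any variable lying outside that subcollection, and it is precisely here that the full mutual independence of the sequence $(X_n)$, rather than mere pairwise independence, is used. Everything else is a direct, repeated invocation of Theorem \ref{thm310}.
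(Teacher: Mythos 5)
Your proof is correct and is exactly the intended argument: the paper states Corollary \ref{cor311} without proof as an immediate consequence of Theorem \ref{thm310}, and your induction on $m$, using positivity to guarantee $P(XY=0)=0$ and mutual independence to make each partial product independent of the next factor, is precisely that route. Nothing is missing; in particular your remark that mutual (not merely pairwise) independence is what justifies the independence of $X_1\cdots X_m$ from $X_{m+1}$ is the right point of care.
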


\section{What sequences of constants are Benford?}\label{sec4}

The goal of this section is to describe the Benford behavior of
deterministic (that is, non-random) sequences. The sequences described below
will typically be increasing (or decreasing) sequences of positive
constants given by a rule that specifies the next entry in the
sequence as a function of the previous entry (or several previous entries, for
example, as in the Fibonacci sequence). The most common examples are
iterations of a single function, i.e., where the same function is
applied over and over again.
As will be seen here, three basic principles describe the Benford
behavior of such sequences: 
\begin{enumerate}
\item no polynomially increasing or decreasing sequence
  (or its reciprocals) is Benford; 
\item almost every, but not every, exponentially increasing positive
  sequence is Benford, and if it is Benford for one starting point, then it
  is Benford for all starting points; and 
\item every super-exponentially increasing or decreasing positive
  sequence is Benford for almost every, but not every, starting point.
\end{enumerate}
To facilitate discussion of iterations of a function $f:\R \rightarrow
\R$, the $n$th iterate of $f$ is denoted by $f^{[n]}$, so
$f^{[1]}(x) = f(x), f^{[2]}(x) = f\bigl( f(x)\bigr), f^{[3]}(x) =
f\bigl(f\bigl(f(x)\bigr) \bigr)$, etc.
Thus, $\bigl( f^{[n]}(x)\bigr)$ denotes the infinite sequence of
iterates of $f$ starting at $x$, i.e., 
$$
\bigl( f^{[n]}(x)\bigr) = \Bigl( f(x), f\bigl( f(x)\bigr),f\bigl( f
\bigl(f(x)\bigr) \bigr), \ldots \Bigr).
$$
The next example illustrates sequences with the three types of growth
mentioned above.

\begin{example}\label{ex319}
(i) Let $f(x) = x+1$.  Then $\bigl(f^{[n]}(x)\bigr) = (x+1, x+2,
  x+3, \ldots)$, so $\bigl( f^{[n]}(1)\bigr) = (2, 3, 4,  \ldots)$, a polynomially (in fact, linearly) increasing sequence.

\smallskip

\noindent
(ii) Let $g(x) = 2x$. Then $\bigl( g^{[n]}(x)\bigr) = (2x, 4x, 8x,
  \ldots)$, so $\bigl( g^{[n]}(1)\bigr) = (2, 4, 8, \ldots)$ and
  $\bigl( g^{[n]}(3)\bigr) = (6, 12, 24, \ldots)$, both exponentially increasing sequences.

\smallskip

\noindent
(iii) Let $h(x) = x^2$. Then $\bigl( h^{[n]}(x)\bigr) = (x^2, x^4,
  x^8, \ldots)$. Then $\bigl( h^{[n]}(1)\bigr) = (1, 1, 1, \ldots)$ is
  constant whereas $\bigl( h^{[n]}(2)\bigr) = (4, 16, 256, \ldots)$ is a super-ex\-po\-nen\-tially increasing sequence.
\end{example}

Recall from Example \ref{ex30}(i) that the sequence of positive
integers $(n)$ is not Benford.  Thus by the scale-invariance
characterization of Benford sequences in Theorem \ref{thm34} above, no
arithmetic sequence $(a, 2a, 3a, \ldots)$ is Benford for any real
number $a$ either.  In fact, no polynomially increasing sequence, or
the decreasing sequence of its reciprocals, is Benford. 

\begin{proposition}\label{prop353}
The sequence $(an^b) = (a, a2^b, a3^b, \ldots)$ is not Benford for any real numbers $a$ and $b$.
\end{proposition}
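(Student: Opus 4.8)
The plan is to translate the statement into one about uniform distribution mod $1$ via Theorem \ref{thm330}, dispose of the degenerate parameter values, and then establish the heart of the matter: that the slowly growing sequence $(b\log n)$ is not uniformly distributed mod $1$. First I would reduce the number of parameters. Since $S(x)=S(-x)$ and $n^b>0$ for all $n\in\N$, the sign of $a$ is irrelevant, so I may assume $a\ge 0$. If $a=0$ the sequence is identically $0$, whose significand is $0$, and it fails Definition \ref{def28} outright; if $b=0$ the sequence is the constant sequence $(a,a,a,\ldots)$, which is plainly not Benford. So assume $a>0$ and $b\ne 0$. Scaling by $1/a>0$ and invoking Proposition \ref{prop332}, the sequence $(an^b)$ is Benford if and only if $(n^b)$ is, and applying Proposition \ref{prop332} once more with $k=-1$ shows it is enough to treat $b>0$ (if $(n^b)$ with $b<0$ were Benford, so would be its reciprocal $(n^{|b|})$, which we will have ruled out). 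By Theorem \ref{thm330}, it now suffices to prove that, for $b>0$, the sequence $(b\log n)$ is \emph{not} uniformly distributed mod $1$.

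The key feature to exploit is that $(b\log n)$ crawls: its increments $b\log(n+1)-b\log n=b\log\!\bigl(1+\tfrac1n\bigr)$ tend to $0$, so the sequence lingers ever longer in each interval of length one and cannot sweep them out evenly. To make this quantitative I would count directly along a well-chosen subsequence of cutoffs. Fix $s\in(0,1)$. The condition $\langle b\log n\rangle\le s$ is equivalent to $n\in\bigcup_{k\ge 0}\bigl[10^{k/b},10^{(k+s)/b}\bigr]$, and the number of integers in the $k$th of these intervals is asymptotically $10^{(k+s)/b}-10^{k/b}$. Summing a geometric series over $k=0,1,\ldots,M-1$ and dividing by the cutoff $N_M=\lfloor 10^{M/b}\rfloor$, I expect the empirical proportion to satisfy
\[
\frac{\#\{1\le n\le N_M:\langle b\log n\rangle\le s\}}{N_M}\ \longrightarrow\ \frac{10^{s/b}-1}{10^{1/b}-1}=\frac{r^s-1}{r-1},\qquad r:=10^{1/b}>1,
\]
the per-interval counting errors being $O(M)$ and hence negligible against the geometrically growing main term.

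Finally I would observe that this limit is \emph{not} equal to $s$. Writing $g(s)=\frac{r^s-1}{r-1}$, one checks $g''(s)>0$ for $r>1$, so $g$ is strictly convex with $g(0)=0$ and $g(1)=1$; hence $g(s)<s$ for every $s\in(0,1)$, and in particular $g(\tfrac12)<\tfrac12$. Thus along the subsequence $(N_M)$ the empirical proportion for $s=\tfrac12$ converges to a value strictly below $\tfrac12$, so the full limit in Definition \ref{def28} either fails to exist or differs from $\tfrac12$; either way $(b\log n)$ is not uniformly distributed mod $1$, and by Theorem \ref{thm330} the sequence $(an^b)$ is not Benford. The main obstacle is exactly this step of turning the ``increments tend to $0$'' heuristic into a rigorous disproof: one cannot simply compute the limiting proportion, since it does not exist, so the argument must instead exhibit a single subsequence of cutoffs along which the proportion converges to the ``wrong'' value, and the explicit geometric sum above is what supplies it.
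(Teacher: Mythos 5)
Your proof is correct, and it is necessarily more detailed than what the paper offers: the paper's ``proof'' of Proposition \ref{prop353} is only a pointer to \cite[Example 4.7(ii)]{BerAH15}, since this survey deliberately omits proofs. Your argument reconstructs, in full, essentially the standard argument that the citation delegates to the literature: reduce via Proposition \ref{prop332} to the case $a=1$, $b>0$ (your handling of $a=0$, of $b=0$, of the sign of $a$, and the contrapositive uses of Proposition \ref{prop332} for the scaling and for $b<0$ are all valid), translate via Theorem \ref{thm330} into the assertion that $(b\log n)$ is not uniformly distributed mod $1$, and refute equidistribution by counting along the geometric subsequence $N_M=\lfloor 10^{M/b}\rfloor$. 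The counting is sound: for fixed $s\in(0,1)$ the $M$ intervals $[10^{k/b},10^{(k+s)/b}]$, $0\le k\le M-1$, are pairwise disjoint (because $s<1$), each contains $10^{(k+s)/b}-10^{k/b}+O(1)$ integers, and the $k=M$ interval meets $[1,N_M]$ in at most one point, so the proportion tends to $g(s)=(r^s-1)/(r-1)$ with $r=10^{1/b}>1$; strict convexity of $g$ together with $g(0)=0$, $g(1)=1$ then gives $g(\tfrac12)<\tfrac12$. Your closing remark is also the right diagnosis of where the real content lies: the empirical proportions here have no limit at all (they oscillate), so one cannot argue by computing ``the'' limiting proportion; exhibiting a single subsequence of cutoffs along which the proportion converges to a wrong value is exactly what is required, and your geometric sum supplies it.
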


\begin{proof}
See \cite[Example 4.7(ii)]{BerAH15}. 
\end{proof}

\begin{example}
The sequences $(n^2) = (1, 4, 9,  \ldots)$ and $(n^{-2}) = (1, \frac{1}{4}, \frac{1}{9}, \ldots)$ are not Benford.
\end{example} 

Recall again that the sequence $(2^n)$ is Benford. This also follows
as a special case from the next theorem, which deals with
exponentially increasing sequences generated by iterations of linear
functions. Recall that a real number $a$ is a {\em rational power\/}
of $10$ if $a = 10^{m/k}$ for some $m, k \in \Z$, $k \neq 0$. For example, 
$\sqrt{10} = 10^{1/2}$ and $\sqrt[3]{100}=10^{2/3}$ are rational
powers of 10, but 2 and $\pi$ are not. As is easy to check, if $X$ is
a continuous random variable, then ${P(X \: \mbox{\rm  is a rational
    power of} \, \,  10) = 0}$.

\begin{theorem}\label{thm321}
Let $f(x) = ax + b$ for some real numbers $a > 1$ and $b \geq 0$.
Then for every $x>0$ the sequence $\bigl(f^{[n]}(x)\bigr)$ is Benford if and only if $a$ is not a rational power of $10$.
\end{theorem}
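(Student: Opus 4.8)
The plan is to reduce the statement, via Theorem~\ref{thm330}, to a question about uniform distribution mod~$1$, and then to invoke the equidistribution theorem of Lemma~\ref{lem341}. First I would compute a closed form for the iterates. Since $a\neq1$, the fixed point of $f$ is $-c$ with $c=\tfrac{b}{a-1}\ge0$, and solving the recursion $f^{[n]}(x)=a\,f^{[n-1]}(x)+b$ gives
\[
f^{[n]}(x)=a^{n}(x+c)-c .
\]
Because $x>0$ we have $x+c>0$, so $f^{[n]}(x)\ge ax+b>0$ for every $n$ and $f^{[n]}(x)\to\infty$; in particular no absolute value is needed. By Theorem~\ref{thm330}, the sequence $\bigl(f^{[n]}(x)\bigr)$ is Benford exactly when $\bigl(\log f^{[n]}(x)\bigr)$ is uniformly distributed mod~$1$.

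Next I would isolate the dominant term of the logarithm. Factoring out $a^{n}(x+c)$,
\[
\log f^{[n]}(x)=n\log a+\log(x+c)+\log\!\Bigl(1-\tfrac{c}{a^{n}(x+c)}\Bigr),
\]
and because $a>1$ the last summand tends to $0$; call it $\varepsilon_{n}$. Thus $\log f^{[n]}(x)=n\log a+\log(x+c)+\varepsilon_{n}$ with $\varepsilon_{n}\to0$. The plan is then to argue that neither the fixed shift $\log(x+c)$ nor the vanishing perturbation $\varepsilon_{n}$ affects uniform distribution mod~$1$, so that $\bigl(\log f^{[n]}(x)\bigr)$ is u.d.\ mod~$1$ if and only if $(n\log a)$ is. Granting this, Lemma~\ref{lem341} closes the argument: $(n\log a)$ is u.d.\ mod~$1$ iff $\log a$ is irrational, and $\log a=\log_{10}a$ is irrational precisely when $a\neq10^{m/k}$ for all $m,k\in\Z$ with $k\neq0$, i.e.\ when $a$ is not a rational power of $10$. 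This also explains why the conclusion is independent of the starting point $x$, as only the irrelevant shift $\log(x+c)$ depends on $x$.

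The main obstacle is the perturbation step, and here some care is needed because the paper deliberately avoids the Fourier (Weyl-criterion) route. I would argue directly with $\varepsilon$'s. Invariance of u.d.\ mod~$1$ under the constant shift $\log(x+c)$ is immediate from the definition, so write $y_{n}=n\log a+\log(x+c)$. Fix $s\in(0,1)$ and $\varepsilon\in(0,1-s)$, and choose $N_{0}$ with $|\varepsilon_{n}|<\varepsilon$ for all $n>N_{0}$. A short check of where the fractional part can land (the two-sided $\varepsilon$-slack absorbing any wrap-around past $0$ or $1$) yields, for every $n>N_{0}$, the inclusions
\[
\bigl\{\langle y_{n}\rangle\in[\varepsilon,s-\varepsilon]\bigr\}\subseteq\bigl\{\langle y_{n}+\varepsilon_{n}\rangle\le s\bigr\}\subseteq\bigl\{\langle y_{n}\rangle\in[0,s+\varepsilon]\cup[1-\varepsilon,1)\bigr\}.
\]
Counting the indices $N_{0}<n\le N$ in each event, dividing by $N$, letting $N\to\infty$ with $(y_{n})$ u.d.\ mod~$1$ (the term $N_{0}/N\to0$), the two outer proportions converge to $s-2\varepsilon$ and $s+2\varepsilon$; since $\varepsilon$ is arbitrary the middle proportion converges to $s$, which is precisely u.d.\ mod~$1$ for $\bigl(\log f^{[n]}(x)\bigr)$.

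Finally I would dispatch the converse. When $a$ is a rational power of $10$, $\log a=p/q$ is rational, so $(n\log a)$ assumes only the finitely many values $0,\tfrac1q,\dots,\tfrac{q-1}q$ mod~$1$; after the shift and the perturbation, $\langle\log f^{[n]}(x)\rangle$ is therefore confined, for all large $n$, to $\varepsilon$-neighborhoods of finitely many points. Such a sequence puts asymptotic proportion $0$ in any subinterval avoiding those points, so it is not u.d.\ mod~$1$, and hence $\bigl(f^{[n]}(x)\bigr)$ is not Benford. I expect the bookkeeping in the inclusion display to be the only delicate part; everything else is a direct assembly of Theorem~\ref{thm330} and Lemma~\ref{lem341}.
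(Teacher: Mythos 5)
Your proposal is correct: the closed form $f^{[n]}(x)=a^{n}(x+c)-c$, the reduction via Theorem~\ref{thm330} to uniform distribution mod~$1$, the stability of uniform distribution mod~$1$ under the constant shift $\log(x+c)$ and the vanishing perturbation $\varepsilon_{n}$ (your two set inclusions check out, including the wrap-around case), the appeal to Lemma~\ref{lem341}, and the finitely-many-values argument for the converse are all sound. This is essentially the same approach the paper intends --- the paper itself gives no inline proof but defers to \cite[Theorem 6.13]{BerAH15}, which proceeds by exactly this reduction to equidistribution mod~$1$ --- so there is nothing substantive to add.
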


\begin{proof}
See \cite[Theorem 6.13]{BerAH15}. 
\end{proof}

\begin{example}\label{ex322}
(i) Let $f(x) = 2x$.  Since 2 is not a rational power of $10$, the
  sequence $\bigl(f^{[n]}(x)\bigr) = (2^nx)$ is Benford for every $x
  >0$; in particular taking $x=1$ shows that $(2^n)$ is Benford. Similarly, letting $g(x) = 2x + 1$, the sequence $\bigl(
  g^{[n]}(x)\bigr) = (2x + 1, 4x + 3, 8x + 7, \ldots )$ is also Benford for every $x > 0$.

\smallskip

\noindent
(ii) Let $g(x) =  \sqrt{10} x$.  Since $\sqrt{10} = 10^{1/2}$
  is a rational power of 10, the sequence $\bigl( g^{[n]}(x)\bigr) = (\sqrt{10} x, 10x, 10\sqrt{10} x, \ldots)$ is not Benford for any $x$. In particular, if $x=1$, the first significant digit of every entry in the sequence is either 1 or 3.
\end{example}
 
The Benford behavior of sequences generated by iterations of linear
functions as shown in Theorem \ref{thm321}, such as $(x_n)$ where
$x_{n+1}=2x_n+1$ for all $n>1$, has been extended to various wider
settings. One such setting is {\em linear difference equations}, where
the next entry in a sequence may depend linearly on several past
entries, such as the Fibonacci sequence $( 1,1,2,3,5,\ldots )$ where
$x_{n+1}=x_n+ x_{n-1}$; see \cite[Section 7.5]{BerAH15}.

As seen in Theorem \ref{thm321} above, for exponentially increasing
sequences generated by iterations of linear functions, the resulting
sequence is Benford or not Benford depending on the coefficient of the
leading term, and if it is Benford (or not Benford) for one starting
point $x > 0$, then it is Benford (not Benford, respectively) for {\em
  all\/} starting points $x >0$.
As will be seen in the next theorem, this is in contrast to the
situation for {\em super-exponentially\/} increasing (or decreasing)
functions, where the Benford property of the sequence $\bigl( f^{[n]}
(x)\bigr)$ does not depend on the coefficient of the leading term, but
does depend on the starting point $x$.  

\begin{theorem}\label{thm323}
Let $f$ be any non-linear polynomial with $f(x)>x$ for some real
number $a$ and all $x>a$.  Then
$\bigl( f^{[n]}(X)\bigr)$ is a Benford sequence with probability one
for every continuous random variable $X$ with $P(X>a)=1$, but there are infinitely
many $x > a$ for which $\bigl( f^{[n]}(x)\bigr)$ is not Benford.
\end{theorem}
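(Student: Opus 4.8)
The plan is to use the logarithmic characterization in Theorem~\ref{thm330} and reduce the problem to the equidistribution of the orbit of a single ``times $d$'' map, where $d = \deg f \ge 2$. Since $f$ is a non-linear polynomial with $f(x) > x$ for all $x > a$, its leading coefficient $c$ is positive and $d \ge 2$; for any $x > a$ the orbit $\bigl(f^{[n]}(x)\bigr)$ is strictly increasing and tends to $+\infty$, so eventually all iterates are positive and I may write $L_n = \log f^{[n]}(x)$ in place of $\log|f^{[n]}(x)|$. By Theorem~\ref{thm330} it suffices to decide when $(L_n)$ is uniformly distributed mod~$1$. Writing $\log f(t) = d\log t + \log c + r(t)$, where $r(t) = \log\!\bigl(f(t)/(c\,t^{d})\bigr) \to 0$ as $t \to \infty$, the orbit satisfies the recursion $L_{n+1} = d\,L_n + \log c + r\bigl(f^{[n]}(x)\bigr)$. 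Because $f^{[n]}(x) \to \infty$ super-exponentially, the error terms $\varepsilon_n := r\bigl(f^{[n]}(x)\bigr) = O\bigl(1/f^{[n]}(x)\bigr)$ decay doubly-exponentially and are in particular summable.

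First I would solve this recursion. Subtracting the fixed point $-\tfrac{\log c}{d-1}$ and iterating gives
\[
L_n = d^{\,n}\,\alpha(x) - \frac{\log c}{d-1} - \eta_n,
\qquad
\alpha(x) := \log x + \frac{\log c}{d-1} + \sum_{k=0}^{\infty} d^{-1-k}\,\varepsilon_k,
\]
where $\eta_n = \sum_{j\ge 0} d^{-1-j}\varepsilon_{n+j} \to 0$. Since adding a constant and a null sequence does not affect uniform distribution mod~$1$, the sequence $(L_n)$ is uniformly distributed mod~$1$ if and only if $\bigl(d^{\,n}\alpha(x)\bigr)$ is. The series defining $\alpha$ converges uniformly on compact subsets of $(a,\infty)$, so $\alpha$ is real-analytic there; differentiating the recursion shows $L_n'(x) \sim d^{\,n}/(x\ln 10)$ and hence $\alpha'(x) > 0$ for all large $x$, so $\alpha$ is a strictly increasing (in particular non-constant) change of variable.

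The two assertions now follow from the behaviour of the single orbit $\bigl(d^{\,n}\alpha\bigr)$, which is governed by the classical $d \ge 2$ analogue of Lemma~\ref{lem341}: for Lebesgue-almost every $\alpha$ the sequence $\bigl(d^{\,n}\alpha\bigr)$ is uniformly distributed mod~$1$ (equivalently, almost every number is normal to base $d$), while the exceptional set $B$ has Lebesgue measure zero and still contains every rational. For the almost-sure statement, since $\alpha$ is a smooth strictly monotone change of variable and $X$ is continuous, the law of $\alpha(X)$ is absolutely continuous; hence $P\bigl(\alpha(X)\in B\bigr)=0$, so $\bigl(d^{\,n}\alpha(X)\bigr)$ — and therefore $\bigl(f^{[n]}(X)\bigr)$ — is Benford with probability one. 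For the exceptional points, $\alpha$ is continuous and non-constant, so its image contains an interval and therefore infinitely many rationals; for each rational value $\alpha(x)=p/q$ the residues $d^{\,n}p \bmod q$ are eventually periodic, so $\bigl(d^{\,n}\alpha(x)\bigr)$ takes only finitely many values mod~$1$ and is not uniformly distributed. Pulling these back through $\alpha$ yields infinitely many (indeed uncountably many) $x>a$ for which $\bigl(f^{[n]}(x)\bigr)$ is not Benford.

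I expect the main obstacle to be the transfer from ``almost every $\alpha$'' to ``almost surely in $X$'': one must verify that $\alpha$ is a genuinely regular (monotone, or at least analytic with isolated critical points) change of variable, so that the continuous law of $X$ pushes forward to an absolutely continuous law in $\alpha$-space, which then ignores the measure-zero exceptional set $B$. The underlying equidistribution of $\bigl(d^{\,n}\alpha\bigr)$ for a.e.\ $\alpha$ is a standard fact (the ergodicity of the times-$d$ map, or Borel's normal number theorem), but it is genuinely stronger than the linear Weyl result of Lemma~\ref{lem341} and must be imported rather than derived from it; the careful bookkeeping of the summable errors $\varepsilon_n$ needed to legitimise the shadowing $L_n = d^{\,n}\alpha(x) + O(1) + o(1)$ is routine but essential.
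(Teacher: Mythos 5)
Your strategy is the one behind the proof the paper points to (the paper itself gives no argument beyond citing \cite[Theorem 6.23]{BerAH15}): solve the recursion $L_{n+1}=dL_n+\log c+\varepsilon_n$ so that $\log f^{[n]}(x)$ is shadowed by $d^{n}\alpha(x)$ up to a constant plus a null sequence, invoke the fact that $(d^{n}y)$ is uniformly distributed mod $1$ for Lebesgue-almost every $y$ (a genuinely stronger input than Lemma~\ref{lem341}, as you correctly note), and pull rational values of $\alpha$ back to obtain non-Benford starting points. The shadowing algebra is correct, and the exceptional-point half is complete: continuity and non-constancy of $\alpha$ give an interval in its range, hence infinitely many rational values $p/q$, and $(d^{n}p/q)$ takes only finitely many values mod $1$, so those orbits are not Benford.

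The almost-sure half, however, has a genuine gap, precisely at the step you yourself flag as the main obstacle; neither of your two justifications closes it. First, uniform convergence on compacta of a series of real-analytic functions does \emph{not} make the sum real-analytic --- that inheritance is a complex-analytic phenomenon, and uniform limits of real-analytic functions can be nowhere differentiable --- so the analyticity claim is unsupported as stated. Second, your derivative estimate gives $\alpha'>0$ only for all \emph{sufficiently large} $x$, yet you then treat $\alpha$ as a globally strictly monotone change of variable for an arbitrary continuous $X$ with $P(X>a)=1$. That is false in general: for $f(x)=x^{2}+1$ (the paper's Example~\ref{ex324}(i), where $a$ may be taken arbitrary) one has $f^{[n]}(-x)=f^{[n]}(x)$, hence $\alpha(-x)=\alpha(x)$, so $\alpha$ is not monotone on the set where $X$ lives; the same failure occurs for finite $a$ whenever $f$ is not monotone on all of $(a,\infty)$. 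The repair is available but must be made: orbits escape to $(x_0,\infty)$ uniformly on compact sets (Dini-type argument, since $f^{[n+1]}(x)>f^{[n]}(x)$ pointwise), so for each compact $K\subset(a,\infty)$ there is $m$ with $f^{[m]}(K)\subset(x_0,\infty)$; Benfordness of the orbits of $x$ and of $f^{[m]}(x)$ is the same (tails of sequences), so the exceptional set in $K$ is contained in $\bigl(f^{[m]}\bigr)^{-1}$ of the exceptional set in $(x_0,\infty)$. The latter is $\bigl(\alpha|_{(x_0,\infty)}\bigr)^{-1}(B)$, which is null because there $\alpha$ is $C^1$ with strictly positive derivative, so its inverse is locally Lipschitz and pulls null sets back to null sets; and the non-constant polynomial $f^{[m]}$ also pulls null sets back to null sets, having only finitely many critical points. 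Note, finally, that strict monotonicity plus continuity alone would not suffice even on $(x_0,\infty)$ --- a strictly increasing continuous map can pull a null set back to a set of positive measure --- so the nonvanishing-derivative property (justified by uniform convergence on compacta of the term-by-term differentiated series) is genuinely what the argument needs.
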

 
\begin{proof}
See \cite[Theorem 6.23]{BerAH15}. 
\end{proof}

Thus super-exponentially increasing sequences are Benford for {\em
  almost all starting points\/} in the sense that if the starting
point is selected at random according to any continuous distribution
on $[a,\infty)$, then the resulting sequence is Benford with probability one.

\begin{example}\label{ex324}
(i) Let $f(x) = x^2 + 1$. Note that $f(x)>x$ for all $x$, so in Theorem
\ref{thm323} the number $a$ is arbitrary (or, more formally,
one may take $a=-\infty$). Thus there are infinitely many $x$
for which $\bigl( f^{[n]}(x)\bigr)$ is not Benford, but $\bigl(
f^{[n]}(X)\bigr)$ is Benford with probability one if $X$ is
continuous. However, in this example it is not easy to determine
exactly which starting points will yield Benford sequences. For instance,
it is unknown whether or not the sequence starting at $1$, i.e.,
$\bigl( f^{[n]}(1)\bigr) = (2, 5, 26, \ldots)$, is Benford; see \cite[Example 6.25]{BerAH15}.

\smallskip

\noindent
(ii) Let $g(x) = x^2$. Here Theorem \ref{thm323} applies with
$a=1$. Hence there are infinitely many $x >1$ so that $\bigl(
 g^{[n]}(x)\bigr) = (x^2, x^4, x^8, \ldots)$ is {\em not\/} Benford
 (e.g., $x = 10, 100, 1000, \ldots$). Since $g^{[n]}(1/x) =
 1/g^{[n]}(x)>0$ for all $n\in \N$ and $x\ne 0$, it follows with
 Proposition \ref{prop332} that if the starting point is selected at random via any continuous random variable $X$, then
 $\bigl( g^{[n]}(X)\bigr) = (X^2, X^4, X^8, \ldots)$ is Benford with probability one.
\end{example}
 
The results for iterations of functions above deal exclusively with
repeated application of the {\em same} function.  As another example
of the remarkable robustness of Benford's law, Benford sequences may also arise from the iterated application of {\em different\/} functions.
The next proposition, which follows easily from  \cite[Proposition 4.6(i)]{BerAH15} and Theorem \ref{thm330} above, provides an example of this behavior.

\begin{proposition}\label{prop325}
Let $f_1(x) = a_{1}x + b_1$ and  $f_2(x) = a_{2}x + b_2$ for some real
numbers $a_1, a_2 > 1$ and $b_1, b_2\ge 0$.
Letting $g_{n}(x) = f_{1}(x)$ if $n$ is odd, and $= f_{2}(x)$ if $n$
is even, then for every $x>0$ the sequence $\bigl( g^{[n]}(x)\bigr)
=\bigl( g_{1} (x), g_{2}\bigl( g_{1}(x) \bigr), \ldots\bigr)$ is
Benford if and only if $a_1a_2$ is not a rational power of $10$.
\end{proposition}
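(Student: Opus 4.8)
The plan is to reduce everything to the equidistribution criterion of Theorem~\ref{thm330}: since every iterate is positive, $\bigl(g^{[n]}(x)\bigr)$ is Benford if and only if $\bigl(\log g^{[n]}(x)\bigr)$ is uniformly distributed mod~$1$. Write $x_0 = x$ and $x_n = g^{[n]}(x)$, so that each even term $x_{2m}$ is obtained from $x_{2(m-1)}$ by applying $f_2 \circ f_1$, while each odd term $x_{2m+1}$ is obtained from $x_{2m-1}$ by applying $f_1 \circ f_2$. The key observation is that the two composed maps $F := f_2 \circ f_1$ and $G := f_1 \circ f_2$ are both affine with the \emph{same} slope: $F(x) = a_1 a_2 x + (a_2 b_1 + b_2)$ and $G(x) = a_1 a_2 x + (a_1 b_2 + b_1)$, each of the form ``slope $a_1 a_2 > 1$, intercept $\ge 0$''. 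Hence $x_{2m} = F^{[m]}(x_0)$ and $x_{2m+1} = G^{[m]}(x_1)$, where $x_0 = x > 0$ and $x_1 = f_1(x) = a_1 x + b_1 > 0$ are both admissible starting points for Theorem~\ref{thm321}.

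For the forward direction, suppose $a_1 a_2$ is not a rational power of $10$. Applying Theorem~\ref{thm321} to $F$ (starting at $x_0$) and to $G$ (starting at $x_1$) shows that both subsequences $(x_{2m})_{m\ge 0}$ and $(x_{2m+1})_{m\ge 0}$ are Benford, so by Theorem~\ref{thm330} both $(\log x_{2m})_m$ and $(\log x_{2m+1})_m$ are uniformly distributed mod~$1$. It then remains to check that the interleaving of two sequences, each uniformly distributed mod~$1$, is again uniformly distributed mod~$1$: for every $s \in [0,1]$ the count $\#\{1 \le n \le 2M : \langle \log x_n\rangle \le s\}$ is the sum of the corresponding counts over the even and odd blocks of length $M$, each equal to $sM + o(M)$; dividing by $2M$ gives $s + o(1)$, and odd cut-offs $N = 2M+1$ change this by at most a single bounded term. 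Thus $(\log x_n)$ is uniformly distributed mod~$1$, and $\bigl(g^{[n]}(x)\bigr)$ is Benford.

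For the reverse direction, suppose $a_1 a_2 = 10^r$ with $r$ rational, that is, $\log(a_1 a_2) \in \Q$. Solving the affine recursion explicitly gives $x_{2m} = (a_1 a_2)^m \xi - \tfrac{a_2 b_1 + b_2}{a_1 a_2 - 1}$ with $\xi = x_0 + \tfrac{a_2 b_1 + b_2}{a_1 a_2 - 1} > 0$, together with the parallel formula $x_{2m+1} = a_1 x_{2m} + b_1$; taking logarithms yields $\log x_n = \tfrac{n}{2}\log(a_1 a_2) + \gamma_n + o(1)$, where $\gamma_n$ takes only two values, one for each parity of $n$. Since $\log(a_1 a_2)$ is rational, for each fixed parity $n \mapsto \langle \tfrac{n}{2}\log(a_1 a_2) + \gamma_n\rangle$ is eventually periodic and hence ranges over finitely many points, so the fractional parts $\langle \log x_n\rangle$ accumulate at only finitely many points of $[0,1)$. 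A sequence whose fractional parts accumulate on a finite set cannot be uniformly distributed mod~$1$, so $\bigl(g^{[n]}(x)\bigr)$ is not Benford.

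I expect the main obstacle to be the care needed to argue about the \emph{whole} interleaved sequence rather than its even/odd subsequences: in general a subsequence of a uniformly distributed sequence need not be uniformly distributed, and conversely the failure of one subsequence does not by itself force the failure of the whole. This is exactly why the forward direction needs the interleaving count and the reverse direction needs the explicit finite-accumulation estimate, since these are precisely the steps not already packaged inside Theorem~\ref{thm321}, which treats only the iteration of a single map. Both arguments are elementary, but identifying $F$ and $G$ as affine maps of common slope $a_1 a_2$ is the structural step that makes them work.
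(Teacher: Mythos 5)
Your proof is correct. It differs in execution from the route the paper has in mind: the paper gives no self-contained argument at all, stating only that the proposition follows easily from Theorem~\ref{thm330} together with an equidistribution lemma in \cite[Proposition 4.6(i)]{BerAH15}; that lemma is designed to be applied once, directly to the full logarithmic sequence, after writing $\log g^{[n]}(x) = \tfrac{n}{2}\log(a_1a_2) + \gamma_n + o(1)$ with $\gamma_n$ depending only on the parity of $n$. You instead stay entirely inside the primer's own toolkit: the structural observation that $f_2\circ f_1$ and $f_1\circ f_2$ are affine with the \emph{same} slope $a_1a_2>1$ and nonnegative intercepts lets you invoke Theorem~\ref{thm321} for the even and odd subsequences, and you then do the gluing by hand --- the interleaving count for the ``if'' direction, and the explicit solution of the recursion plus the finite-accumulation-point argument for the ``only if'' direction. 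Both approaches ultimately rest on the uniform-distribution characterization of Theorem~\ref{thm330}, but yours buys self-containedness (every ingredient is either a numbered result of this paper or elementary counting), at the cost of having to supply precisely the two gluing steps that the external lemma would have packaged away. To your credit, you correctly identified these as the dangerous points: a subsequence of a uniformly distributed sequence need not be uniformly distributed, and the failure of one subsequence does not by itself disqualify the whole sequence, so neither direction reduces to Theorem~\ref{thm321} alone; your interleaving estimate and your observation that fractional parts accumulating on a finite set preclude uniform distribution close exactly those gaps.
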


\begin{example}\label{ex326}
Alternating multiplication by 2 and by 3 yields a Benford sequence for
all starting points $x>0$.  In particular starting at $x=1$,  the sequence $(2, 6, 12, 36, 72, \ldots)$ is Benford.
\end{example}
 
In the last example, since iterations of each of the functions $f_1(x)
= 2x$ and $f_2(x) = 3x$ both lead to Benford sequences, it is perhaps
not surprising that alternating applications of them also leads to a
Benford sequence for every starting point $x>0$.  Similarly, even if
the selection of applying $f_1$ or $f_2$ is done {\em at random\/} by
flipping a fair coin at each step, the same conclusion holds (see
Example \ref{ex350} below). More surprisingly perhaps, even in
situations where $f_1$ on its own would not generate any Benford
sequences at all, and is applied more than half the time, the resulting
sequence $\bigl( g^{[n]} (x)\bigr)$ may still be Benford for most $x>0$.

\begin{example}\label{ex353}
Let $f_1(x) = \sqrt{x}$ and $f_2(x) = x^3$.  Then $\bigl(
f_1^{[n]}(x)\bigr)$ is not a Benford sequence for any $x>0$, since
$\bigl( f^{[n]}(x)\bigr) = (\sqrt{x}, \sqrt[4]{x}, \sqrt[8]{x},
\ldots)$ converges to $1$ as $n \rightarrow \infty$. By Theorem
\ref{thm323} and Proposition \ref{prop332}, on the other hand,  $\bigl( f_2^{[n]}(x)\bigr)$ is a
Benford sequence for almost all $x>0$.  As shown in \cite[Example
8.48]{BerAH15}, however, if the functions $f_1$ and $f_2$ are applied
randomly and independently at each step, with $f_1$ 
applied no more than $61.3$ percent of the time, then almost all of the
sequences generated are Benford. \end{example}

\section{ What sequences of random variables are Benford?}\label{sec5}

The goal of this section is to identify several of the key Benford
limiting properties of sequences of random variables. These include
the three basic facts that
\begin{enumerate}
\item powers of every continuous random variable converge to Benford's
  law; 
\item products of random samples from every continuous distribution
  converge to Benford's law; and 
\item 
if random samples are taken from random distributions that are chosen
in an unbiased way, then the combined sample converges to Benford's law.
\end{enumerate}
Here and throughout, {\em \iid}stands for {\em independent and
  identically distributed}; by definition, a {\em random sample} is a
finite sequence $X_1, X_2, \ldots, X_n$ of \iid  random variables.

\begin{definition}\label{def334}
An infinite sequence of random variables $(X_1, X_2,X_3 , \ldots$) {\em converges in distribution to Benford's law\/} if 
 $$
\lim\nolimits_{n\to \infty}  P(S( X_n ) \leq t) = \log t \quad \mbox{\rm for all } t\in [1,10),
$$
and {\em is Benford with probability one\/} if 
$$ 
P \bigl( (X_1, X_2, X_3, \ldots )   \: \mbox {\rm is a Benford
  sequence} \bigr)  = 1.
$$
\end{definition}

\noindent
In general, neither form of convergence implies the other, as the next example shows.

\begin{example}\label{ex351}
(i) Let $X$ be a Benford random variable, and for each $n \in \N$, let
$X_n = X$.  Then the sequence $(X_n) = (X, X, X, \ldots)$ converges to
Benford's law in distribution, since $P(S(X_n) \leq t)= \log t$ for
all $n$ and all $t \in [1,10)$. But $(X_n)$ is never a Benford
sequence, since no constant sequence is Benford.

\smallskip

\noindent
(ii) Let $X$ be a random variable that is identically 2, and let $X_n
= X^n$ for all $n \in \N$. Then $(X_n) = (2^n)$ is Benford with probability one
since $(2^n)$ is a Benford sequence. But for every $n \in \N$, $X_n =
2^n$ is constant, which implies, for example, that $P(D_1(X_n) = 1) =
0$ or 1, and hence does not converge to the Benford probability $\log 2$.  Thus the sequence $(X_n)$ does not converge in distribution to Benford's law. 

\smallskip

\noindent
(iii) If $X_1, X_2, \ldots$ are \iid random variables, then it is easy to see that the sequence $(X_n)$ converges in distribution to Benford's law if and only if it is Benford with probability one.
\end{example}
 
The next two theorems identify classical stochastic settings in which
sequential products of random variables converge in distribution to a
Benford  distribution, even though none of the random variables in the
product need be close to Benford at all. 

\begin{theorem}\label{thm312}
If $X$ is a continuous random variable, then $(X^n)$ converges in distribution to Benford's law and is Benford with probability one.
\end{theorem}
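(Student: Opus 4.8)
\section*{Proof proposal}

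Both assertions concern the sequence $(X^n)$, whose logarithms are $\log|X^n| = n\log|X|$. The plan is therefore to set $Y = \log|X|$ and translate everything into statements about the single random variable $Y$ via the logarithmic characterizations in Theorems \ref{thm330} and \ref{thm31}. Since $X$ is continuous we have $P(X=0)=0$, so $Y$ is defined with probability one; moreover, because $\log|\cdot|$ carries the density of $X$ to a density for $Y$ (via the substitution $w = 10^y$ on $(0,\infty)$), the random variable $Y$ is itself continuous. I will also use the elementary identity $S(x) = 10^{\langle\log|x|\rangle}$, so that for $t\in[1,10)$ one has $S(x)\le t$ exactly when $\langle\log|x|\rangle\le\log t$.

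For the \emph{Benford with probability one} assertion, Theorem \ref{thm330} says that $(X^n)$ is Benford precisely when $(\log|X^n|) = (nY)$ is uniformly distributed mod~$1$. By Weyl's equidistribution theorem (Lemma \ref{lem341}), $(nY)$ is uniformly distributed mod~$1$ if and only if $Y$ is irrational. Now $\log|x|$ is rational exactly when $|x|$ is a rational power of $10$, and the set of such $x$ is countable, hence of Lebesgue measure zero; since $X$ is continuous, it assigns probability zero to any countable set. Thus $Y$ is irrational with probability one, so $(X^n)$ is a Benford sequence with probability one.

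For the \emph{convergence in distribution} assertion, the identity above gives $P(S(X^n)\le t) = P(\langle nY\rangle \le \log t)$, so it suffices to show that $\langle nY\rangle$ converges in distribution to the uniform law on $[0,1]$; the limiting distribution function $s\mapsto s$ is continuous, whence $P(\langle nY\rangle\le \log t)\to\log t$ at every $t$. The plan is to verify this via Fourier coefficients: for each integer $k\ne 0$, the $k$th Fourier coefficient of the law of $\langle nY\rangle$ equals $E\bigl[e^{2\pi i k n Y}\bigr]$, the characteristic function of $Y$ evaluated at a point going to infinity. Because $Y$ is continuous (has a density), the Riemann--Lebesgue lemma forces this characteristic function to vanish at infinity, so for each fixed $k\ne0$ the coefficient tends to $0$ as $n\to\infty$, while the $0$th coefficient is always $1$. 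Vanishing of all nonzero Fourier coefficients in the limit is exactly Weyl's criterion for weak convergence to the uniform law, which completes the argument.

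The main obstacle is the convergence-in-distribution half, and specifically its essential reliance on the continuity of $X$. Two inputs carry the weight: the Riemann--Lebesgue lemma, which needs $Y$ to possess a density, and the Fourier/Weyl criterion that converts pointwise decay of the nonzero Fourier coefficients into weak convergence. Neither the Ces\`aro-type information from the Benford-with-probability-one half nor Weyl's theorem for $(na)$ suffices here, because it is the marginal law of $\langle nY\rangle$ at a single large time $n$ that is at stake, rather than an average over $n$; indeed, Example \ref{ex351}(ii) shows the conclusion fails without continuity, so any successful proof must invoke that hypothesis precisely at this step. By contrast, the Benford-with-probability-one half is a short consequence of Weyl's equidistribution theorem together with the countability of the rational powers of $10$.
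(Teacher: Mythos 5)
Your proof is correct, and it follows essentially the same route as the paper's source: the paper itself gives no argument for Theorem \ref{thm312} beyond citing \cite[Theorem 8.8]{BerAH15}, and the proof there is exactly your two-part scheme --- the almost-sure half via the logarithmic characterization (Theorem \ref{thm330}), Weyl's theorem (Lemma \ref{lem341}), and the countability of the rational powers of $10$, and the distributional half via Fourier coefficients of $\langle n\log|X|\rangle$ killed by the Riemann--Lebesgue lemma. Your closing remark correctly identifies where continuity is indispensable (the distributional half), consistent with the counterexample in Example \ref{ex351}(ii).
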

 
\begin{proof}
See  \cite[Theorem 8.8]{BerAH15}. 
\end{proof}

\begin{example}\label{ex357}
If $U$ is uniformly distributed on $[0,1]$, then by Example
\ref{ex354} above, $U$ is not Benford.  The sequence of random
variables $(U, U^2, U^3, \ldots )$, on the other hand, converges in
distribution to Benford's law and is Benford with probability one. In
fact, $(U^n)$ converges to Benford's law at rate $(n^{-1})$; see
 \cite[Figure 1.6]{BerAH15}.
 \end{example}
 
 As a complement to the last theorem, which shows that powers of every continuous random variable converge to Benford's law, the next theorem shows that products of random samples of every continuous random variable also converge to Benford's law.

\begin{theorem}\label{thm313}
If $X_1, X_2, \ldots$ are \iid continuous random variables, then the
sequence $(X_1, \! X_1X_2,\! \!$ $ X_1X_2X_3, \ldots )$ converges in distribution to Benford's law and is Benford with probability one.
\end{theorem}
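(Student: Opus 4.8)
The plan is to pass to logarithms, turning the multiplicative structure into an additive one. Write $P_n = X_1 X_2 \cdots X_n$ and $Y_j = \log|X_j|$. Since each $X_j$ is continuous, $P(X_j = 0)=0$ and the change of variables $x \mapsto \log|x|$ makes each $Y_j$ again (absolutely) continuous; the $Y_j$ are \iid because the $X_j$ are. The key identity is $\log|P_n| = S_n$ with $S_n = Y_1 + \cdots + Y_n$. Because $S(P_n)\le t$ exactly when $\langle S_n\rangle \le \log t$ --- the relation underlying Theorem \ref{thm31} --- the sequence $(P_n)$ converges in distribution to Benford's law if and only if $\langle S_n\rangle$ converges in distribution to the uniform law on $[0,1]$; likewise, by Theorem \ref{thm330}, $(P_n)$ is Benford precisely when $(S_n)$ is uniformly distributed mod~$1$. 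Everything thus reduces to two statements about partial sums of \iid continuous random variables.

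For convergence in distribution I would use the Fourier (Weyl) criterion: since $k\in\Z$ one has $e^{2\pi i k S_n} = e^{2\pi i k\langle S_n\rangle}$, and $\langle S_n\rangle$ converges in distribution to uniform on $[0,1]$ if and only if $E\bigl[e^{2\pi i k S_n}\bigr] \to 0$ for every nonzero integer $k$. Writing $\phi(k) = E\bigl[e^{2\pi i k Y_1}\bigr]$, independence gives $E\bigl[e^{2\pi i k S_n}\bigr] = \phi(k)^n$. The point is that $|\phi(k)| < 1$ for every $k \neq 0$: if $|\phi(k)|=1$ then $e^{2\pi i k Y_1}$ would be almost surely constant, forcing $Y_1$ onto a lattice and contradicting its continuity. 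Hence $|\phi(k)^n| = |\phi(k)|^n \to 0$, proving the convergence in distribution.

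The Benford-with-probability-one assertion is the main obstacle, since it concerns almost every trajectory and the terms $Z_n = e^{2\pi i k S_n}$ are not independent. By Weyl's criterion for sequences it suffices to show that for each fixed nonzero integer $k$ the averages $A_N = \frac1N\sum_{n=1}^N Z_n$ tend to $0$ almost surely; as there are only countably many $k$, a single null set then handles all of them. Here I would argue through second moments. With $\rho = \phi(k)$, the computation $E[Z_m\overline{Z_n}] = \rho^{\,m-n}$ for $m\ge n$ gives $E\bigl[|A_N|^2\bigr] \le \frac1{N^2}\sum_{m,n=1}^N |\rho|^{|m-n|} \le \frac{C}{N}$, where $C = (1+|\rho|)/(1-|\rho|)$ is finite because $|\rho|<1$; thus $A_N \to 0$ in mean square.

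To upgrade this to almost-sure convergence I would restrict to the subsequence $N_j = j^2$. Then $\sum_j E\bigl[|A_{N_j}|^2\bigr] \le C\sum_j j^{-2} < \infty$, so Chebyshev's inequality and the Borel--Cantelli lemma give $A_{N_j} \to 0$ almost surely. Finally, for $N_j \le N < N_{j+1}$ the estimate $|A_N - A_{N_j}| \le (1 + |A_{N_j}|)(N - N_j)/N$ (using $|Z_n| = 1$) together with $(N_{j+1}-N_j)/N_j = (2j+1)/j^2 \to 0$ fills the gaps and yields $A_N \to 0$ almost surely. This shows $(S_n)$ is uniformly distributed mod~$1$ with probability one, and hence, via Theorem \ref{thm330}, that $(P_n)$ is Benford with probability one. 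The delicate step throughout is precisely this passage from mean-square to almost-sure control of the exponential sums, handled uniformly over all frequencies $k$.
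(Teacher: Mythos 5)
Your proof is correct. A point of reference first: the paper itself contains no proof of Theorem \ref{thm313} --- it only cites \cite[Theorem 8.19]{BerAH15} --- so the comparison is with that source and with the paper's surrounding results. Your route (pass to $S_n = \sum_{j\le n}\log|X_j|$; prove convergence in distribution via the L\'evy/Weyl criterion on the circle together with the strict bound $|\phi(k)|<1$ for $k\neq 0$, which holds because $|\phi(k)|=1$ would force $Y_1$ onto a countable lattice, impossible for a continuous random variable; prove the almost-sure statement via Weyl's criterion for sequences, the correlation identity $E[Z_m\overline{Z_n}]=\phi(k)^{m-n}$, the resulting bound $E\bigl[|A_N|^2\bigr]\le C/N$, Chebyshev and Borel--Cantelli along $N_j=j^2$, and the gap-filling estimate) is the standard argument, and it is essentially the proof given for the cited theorem in \cite{BerAH15}; all the individual steps check out, including the lattice argument, the covariance computation, and the bound $|A_N-A_{N_j}|\le(1+|A_{N_j}|)(N-N_j)/N$. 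It is worth noting that, staying inside the paper's own toolkit, the almost-sure half has a much shorter derivation: Theorem \ref{thm312} says $(X_1^n)$ is Benford with probability one whenever $X_1$ is continuous, and Proposition \ref{prop349} transfers exactly this property to the products $(X_1X_2\cdots X_n)$. What your longer argument buys is self-containment (modulo the two Weyl-type criteria) and, crucially, the convergence-in-distribution half, which that shortcut does not provide.
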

 
\begin{proof}
See  \cite[Theorem 8.19]{BerAH15}. 
\end{proof}

\begin{example}\label{ex358}
If $U_1, U_2, \ldots$ are \iid random variables uniformly distributed
on $[0,1]$, then the sequence of products $U_1, U_1U_2, U_1U_2U_3,
\ldots$ converges to Benford's law in distribution and is Benford with
probability one. In fact, $(U_1 U_2 \cdots U_n)$ converges to Benford's
law at a rate faster than $(2^{-n})$; see \cite[Figure 8.3]{BerAH15}.
\end{example}

The next proposition illustrates a curious relationship between the Benford properties of powers of a single distribution and the products of random samples from that distribution.

\begin{proposition}\label{prop349}
Let $X_1, X_2, \ldots$ be \iid random variables.  If $(X_1\! ,
X_1^2\! ,  X_1^3\! , \ldots)$ is Benford with probability one, then so is
$ (X_1, X_1X_2 , X_1X_2X_3, \ldots)$.
\end{proposition}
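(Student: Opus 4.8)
The plan is to translate both the hypothesis and the conclusion into statements about uniform distribution mod~$1$ via Theorem~\ref{thm330}, and then to establish the resulting claim about partial sums of \iid random variables using Weyl's criterion together with a second-moment estimate. Throughout I would write $Y_j = \log|X_j|$, so that $(Y_j)$ is a sequence of \iid real random variables; note that the hypothesis already forces $P(X_1=0)=0$, so these logarithms are defined almost surely.

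First I would unpack the hypothesis. Since $\log|X_1^n| = n\,\log|X_1| = nY_1$, Theorem~\ref{thm330} says that for a fixed value of $X_1$ the sequence $(X_1^n)$ is Benford exactly when $(nY_1)$ is uniformly distributed mod~$1$, which by Lemma~\ref{lem341} happens exactly when $Y_1$ is irrational. Hence the assumption that $(X_1,X_1^2,X_1^3,\ldots)$ is Benford with probability one is equivalent to $P(Y_1\in\Q)=0$, i.e.\ $Y_1$ is irrational almost surely. Likewise, since $\log|X_1X_2\cdots X_n| = Y_1+\cdots+Y_n =: S_n$, Theorem~\ref{thm330} reduces the desired conclusion to the single statement that the partial sums $(S_n)$ are uniformly distributed mod~$1$ with probability one.

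The heart of the matter is therefore the following: if $(Y_j)$ are \iid with $Y_1$ irrational a.s., then $(S_n)$ is uniformly distributed mod~$1$ a.s. I would prove this through Weyl's criterion (the exponential-sum characterization of uniform distribution mod~$1$), reducing it to showing that for each fixed nonzero integer $h$,
\[
\frac1N \sum_{n=1}^N e^{2\pi i h S_n} \longrightarrow 0 \quad\text{almost surely,}
\]
and then intersecting the resulting countably many probability-one events. Writing $\phi = E\bigl[e^{2\pi i h Y_1}\bigr]$ and using independence, the exponential sum splits into two regimes. When $|\phi|<1$, a direct computation gives $E\bigl[e^{2\pi i h S_m}\,\overline{e^{2\pi i h S_n}}\bigr] = \bar\phi^{\,n-m}$ for $m<n$, whence the second moment of the unnormalized sum is $O(N)$; this yields convergence of the average to $0$ in $L^2$, and hence in probability. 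When $|\phi|=1$, the random variable $\langle hY_1\rangle$ is almost surely equal to a constant $c\in[0,1)$, and here the irrationality hypothesis enters decisively: if $c$ were $0$ then $Y_1$ would lie in $\tfrac1h\Z$ and thus be rational a.s., contradicting the assumption, so $c\in(0,1)$ and the average reduces to a geometric sum tending to $0$ deterministically.

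The main obstacle is upgrading the $L^2$ convergence in the $|\phi|<1$ case to almost sure convergence. I would handle this by the standard subsequence device: the $O(1/N)$ bound on $E\bigl[|N^{-1}\sum_{n\le N} e^{2\pi i h S_n}|^2\bigr]$ is summable along $N_k=k^2$, so Chebyshev together with Borel--Cantelli gives almost sure convergence along that subsequence, and the trivial bound $|e^{2\pi i h S_n}|=1$ controls the fluctuations between consecutive $N_k$ (the gaps being $O(k)$ while $N_k=k^2$), delivering almost sure convergence for the full sequence. Two points then deserve care: verifying that the irrationality hypothesis is genuinely needed only in the $|\phi|=1$ regime (the $|\phi|<1$ regime holds for any \iid $(Y_j)$), and noting that uniform distribution mod~$1$ is a countable conjunction of the Weyl conditions, so the final intersection over all $h\in\Z\setminus\{0\}$ is still a probability-one event, which is exactly what ``Benford with probability one'' for $(X_1,X_1X_2,X_1X_2X_3,\ldots)$ requires.
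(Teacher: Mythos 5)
Your proposal is correct, and every step checks out: the reduction of the hypothesis to ``$Y_1=\log|X_1|$ is irrational a.s.'' via Theorem~\ref{thm330} and Lemma~\ref{lem341} (including the observation that the hypothesis forces $P(X_1=0)=0$), the reduction of the conclusion to almost-sure uniform distribution mod~$1$ of the partial sums $S_n$, the dichotomy on $\phi=E\bigl[e^{2\pi i hY_1}\bigr]$, the second-moment bound $E\bigl[\lvert\sum_{n\le N}e^{2\pi i hS_n}\rvert^2\bigr]=O(N)$ when $\lvert\phi\rvert<1$, the Borel--Cantelli argument along $N_k=k^2$ with the trivial bound filling the gaps, the exclusion of $c=0$ in the unimodular case by irrationality, and the countable intersection over $h$. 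Note, however, that the paper itself contains no argument to compare against: its ``proof'' is only the citation to \cite[Corollary 8.21]{BerAH15}, consistent with the survey's stated policy of omitting proofs. So what your write-up buys is a self-contained derivation, and the machinery you reconstruct (Weyl's exponential-sum criterion plus Fourier coefficients of the law of $Y_1$ on the circle, with a strong-law upgrade) is essentially the same Fourier-analytic route used in the cited monograph for random walks mod~$1$. Your argument in fact proves slightly more than the proposition asks: the only obstruction you ever need to rule out is the lattice case $P(hY_1\in\Z)=1$ for some nonzero integer $h$, a condition strictly weaker than almost-sure irrationality of $Y_1$ (it fails, e.g., for rational-valued $Y_1$ with unbounded denominators), so the same proof yields a stronger sufficient condition for the products $X_1X_2\cdots X_n$ to be Benford with probability one.
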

 
\begin{proof}
See  \cite[Corollary 8.21]{BerAH15}. 
\end{proof}

\begin{example}\label{ex350}
Start with any positive number, and multiply repeatedly by either 2 or 3, where the multiplying factor each time is equally likely to be a 2 or a 3, and independent of the past.  The resulting sequence will be Benford with probability one. 

To see this, let $X_1, X_2, \ldots$ be \iid with $P(X_1 = 2) = P(X_1 = 3) = \frac12$.  Since the sequences $(2^n)$ and $(3^n)$ are both Benford, the sequence 
$(X_1^n) = (X_1, X_1^2, X_1^3, \ldots)$ is Benford with probability
one.  By Proposition \ref{prop349} this implies that the sequence
$(X_1, X_1 X_2, X_1 X_2 X_3, \ldots)$ is also Benford with probability
one, and since Benford sequences are scale-invariant for every $x>0$, the sequence $(xX_1$, $xX_1 X_2$, $xX_1 X_2 X_3, \ldots)$ is Benford with probability one.
\end{example}

Note that if $X_1, X_2, \ldots$ is a random sample from a distribution
that is not Benford, then the classical Glivenko--Cantelli Theorem
implies that the empirical distribution converges to the common
distribution of the $X_k$'s, which is {\em not\/} Benford.  On the
other hand, if random samples from {\em different} distributions are
taken in an ``unbiased'' way, then the empirical distribution of the combined sample will always converge to a Benford distribution. 
The final theorem in this section identifies a central-limit-like
theorem to model this type of convergence to a Benford distribution.
Intuitively, it says that when random samples (or data) from different
distributions are combined, then, if the different distributions are
chosen in an unbiased way, the resulting combined sample will converge to a Benford distribution.

\begin{definition}\label{def314}
A {\em random probability measure} $\bbp$ is a random variable whose values are probability measures on $\R$.
\end{definition}

\begin{example}\label{ex315}
(i) For a practical realization of a random probability measure
$\bbp$, simply roll a fair die --- if the die comes up 1 or 2, $\bbp$
is uniformly distributed on $[0,1]$, and otherwise $\bbp$ is
exponential with mean $1$. More formally, let 
$X$ be a random variable taking values in $\{1, 2, 3, 4, 5, 6\}$ with probability $\frac16$ each (e.g., the results of one toss of a fair die).
Let $P_1$ be uniformly distributed on $[0,1]$, and let $P_2$ be
exponentially distributed with mean $1$, i.e., $P_2\bigl( (-\infty, t]\bigr) = 1 - e^{-t}$ for all $t \geq 0$.  
Define the random probability measure $\bbp$ by $\bbp = P_1$ if $X =$
1 or 2, and $\bbp = P_2$ otherwise. Then with probability $\frac13$,
the value of $\bbp$ is a probability measure that is uniformly
distributed on $[0,1]$, and otherwise (i.e., with probability
$\frac23$), it is a probability measure in $\R$ that is exponential
with mean $1$; see  \cite[Example 8.33]{BerAH15}.  

\smallskip

\noindent
(ii) The classical iterative construction of a random cumulative
distribution function by Dubins and Freedman \cite{DubLF67R}  defines
a random probability measure $\bbp_{\rm DF}$; see  \cite[Example 8.34]{BerAH15}.
\end{example}
 
Clearly, some random probability measures will not generate Benford
behavior.  For example, if $\bbp$ is $P_1$ half the time and $P_2$
half the time, where $P_1$ is uniformly distributed on $[2,3]$ and
$P_2$ is uniformly distributed on $[4,5]$, then random samples from
$\bbp$ will not have any entries with first significant digit 1, and
hence cannot be close to Benford.

On the other hand, if a random probability measure is {\em unbiased\/} 
in a sense now to be defined, then it will always lead to Benford
behavior. The definition of unbiased below is based on the expected value of $\bbp$, that is, the single probability measure that is the average value of $\bbp$.
Given a random probability measure $\bbp$ and any $t \in \R$, the
quantity $\bbp\bigl( (-\infty, t] \bigr)$ is a random variable with
values between $0$ and $1$; denote its expected (average) value by
$E_{\bbp}(t)$.  It is easy to check that $E_{\bbp}(t)$ defines (or
more precisely, is the cumulative distribution function of) a
probability measure $P_{\bbp}$ on $\R$, {\em the average probability
  measure\/} of $\bbp$.

\begin{example}\label{ex335}
Let $\bbp$ be the random probability measure in Example
\ref{ex315}(i).  Then the average probability measure $P_{\bbp}$ 
is the probability distribution of a continuous random variable $X$ with density function $\frac{1}{3} + \frac{2}{3}e^{-x}$ for $0 < x < 1$ and $\frac{2}{3}e^{-x}$ for $x > 1$.
\end{example}
 
\begin{definition}\label{def336}
A random probability measure $\bbp$ has {\em scale-unbiased significant digits\/} if its average probability measure $P_{\bbp}$
has scale-invariant significant digits, and has {\em base-unbiased significant digits\/} if $P_\bbp$ has base-invariant significant digits.
\end{definition}
 
\begin{example}\label{ex359}
The classical Dubins-Freedman construction $\bbp_{\rm DF}$ mentioned
in Example \ref{ex315}(ii) above has both scale- and base-unbiased
significant digits; see  \cite[Example 8.46]{BerAH15}. 
\end{example}
 
The next theorem is the key result that shows that if random samples
are taken from distributions that are chosen at random in any manner
that is unbiased with respect to scale or base, then the resulting
empirical distribution of the combined sample always converges in
distribution to Benford's law. This may help explain, for example, why
the original dataset that Benford drew from many different sources,
why numbers selected at random from newspapers, and why experiments
designed to estimate the distribution of leading digits of all numbers
on the World Wide Web, all yield results that are close to the
logarithmic significant-digit law, i.e., Benford's law.

\begin{theorem}\label{thm339}
Let $\bbp$ be a random probability measure so that $\bbp (S \in
\{0,1\})=0$ with probability one. Let $P_1, P_2, \ldots$ be a random sample (\iid sequence) of probability measures from $\bbp$.  
Fix a positive integer $m$, and let $X_1, X_2, \ldots, X_m$ be a
random sample of size $m$ from $P_1$, let $X_{m+1}$, $\ldots, X_{2m}$
be a random sample of size $m$ from $P_2$, and so on. If $\bbp$ has
scale- or base-unbiased significant digits, then the empirical distribution of the combined sample $X_1, X_2, \ldots, X_m$, $X_{m+1}, \ldots$ converges to Benford's law with probability one, that is, 
$$
P\left(\lim\nolimits_{N \to \infty}  
 \frac{\#\{1 \leq n \leq N : S(X_n) \leq t\}}{N} = \log t \enspace \mbox{\rm
   for all } t \in [1, 10)  \right) = 1 .
$$
\end{theorem}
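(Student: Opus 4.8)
The plan is to reduce the combined-sample statement to an application of the scale-invariance or base-invariance characterizations (Theorems \ref{thm34} and \ref{thm38}) together with a strong-law argument. The key observation is that the average probability measure $P_\bbp$ encodes everything relevant about the limiting empirical distribution: because the $P_1, P_2, \ldots$ are \iid draws from $\bbp$, and because within each block the $X_n$ are \iid from the corresponding $P_j$, the entire combined sample $(X_n)$ is an \iid sequence of $\R$-valued random variables whose common distribution is exactly $P_\bbp$. Indeed, for any fixed $t$, the unconditional probability $P(S(X_n) \le t)$ equals the $\bbp$-average of $\bbp(S \le t)$, which is $P_\bbp(S \le t)$; the blocking structure and the sample size $m$ do not affect this marginal. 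Thus the first step is to verify carefully that $(X_n)$ is \iid with law $P_\bbp$, which hinges on the independence of the block choices $P_j$ from one another and from the within-block samples.

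Once this reduction is in place, the second step is to invoke the hypothesis that $\bbp$ has scale- or base-unbiased significant digits. By Definition \ref{def336} this means precisely that $P_\bbp$ has scale-invariant (respectively base-invariant) significant digits. In the scale-invariant case, Theorem \ref{thm34} immediately gives that $P_\bbp$ is a Benford distribution, since $\bbp(S \in \{0,1\}) = 0$ with probability one forces $P_\bbp(X = 0) = 0$. In the base-invariant case one uses Theorem \ref{thm38}: $P_\bbp$ is a mixture $(1-q)X + qY$ with $X$ Benford and $P(S(Y)=1)=1$; but the standing assumption $\bbp(S \in \{0,1\}) = 0$ rules out the atom at significand $1$, forcing $q = 0$ and hence $P_\bbp$ Benford. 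Either way, one concludes that the common distribution of the \iid sequence $(X_n)$ is Benford.

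The third step is to pass from ``\iid with Benford marginal'' to ``Benford sequence with probability one.'' For each fixed $t \in [1,10)$, the indicators $\mathbf{1}\{S(X_n) \le t\}$ are \iid Bernoulli random variables with mean $P_\bbp(S \le t) = \log t$, so the classical strong law of large numbers gives
$$
\lim_{N \to \infty} \frac{\#\{1 \le n \le N : S(X_n) \le t\}}{N} = \log t
$$
almost surely. To obtain the stronger statement that this holds simultaneously for \emph{all} $t \in [1,10)$ on a single probability-one event, one takes a countable dense set of $t$-values (say the rationals in $[1,10)$), applies the strong law on each, intersects the countably many probability-one events, and then upgrades to all $t$ by a monotonicity-and-continuity argument: the empirical significand distribution function is monotone in $t$, and its proposed limit $\log t$ is continuous, so convergence on a dense set forces convergence everywhere.

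The main obstacle I expect is the first step, namely rigorously justifying that the combined sample is genuinely \iid with law $P_\bbp$ and that the block structure is harmless. One must be careful that the randomness has two layers --- the random choice of each $P_j$ from $\bbp$, and the conditionally independent sampling within each block --- and that these combine to make the $X_n$ unconditionally independent, not merely conditionally independent given the $P_j$'s. Verifying unconditional independence requires checking that the joint law factors once one averages over the block distributions, which is where the \iid assumption on the $P_j$ and the independence of the within-block draws are both essential; this is the only place where measure-theoretic care is really needed, and in the elementary spirit of the paper it is precisely the step one would want to state cleanly rather than belabor.
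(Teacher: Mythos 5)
Your overall architecture (identify the limiting distribution as the average measure $P_\bbp$, show $P_\bbp$ is Benford via Theorems \ref{thm34} and \ref{thm38}, then apply a strong law plus a dense-set/monotonicity argument) is sound, and your Steps 2 and 3 are correct as stated. But Step 1 contains a genuine error, not merely a step needing care: for $m \ge 2$ the combined sample $(X_n)$ fails to be unconditionally independent, because a mixture of product measures is not a product measure. Concretely, take $\bbp$ as in Example \ref{ex315}(i) and $m = 2$: the event $X_1 > 1$ is compatible only with $P_1$ being the exponential distribution, so conditioning on it changes the law of $X_2$ (one gets $P(X_2 > 1 \mid X_1 > 1) = e^{-1}$, whereas $P(X_2 > 1) = \tfrac{2}{3}e^{-1}$); hence $X_1$ and $X_2$ are dependent even though each has marginal law $P_\bbp$. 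Observations within a block are exchangeable (conditionally \iid given $P_j$) but not independent --- this is exactly the de Finetti phenomenon --- so the joint law does \emph{not} ``factor once one averages over the block distributions,'' and the classical \iid strong law you invoke in Step 3 does not apply to the indicators $\mathbf{1}\{S(X_n) \le t\}$ as a single sequence. (Your argument is literally correct only in the special case $m=1$.)

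The gap is repairable because the independence you actually have sits one level up: the random vectors $V_j = (X_{(j-1)m+1}, \ldots, X_{jm})$, $j = 1, 2, \ldots$, \emph{are} \iid, since distinct blocks use independent $P_j$'s and independent draws. For fixed $t \in [1,10)$, the block counts $Y_j = \#\{1 \le i \le m : S(X_{(j-1)m+i}) \le t\}$ are therefore \iid and bounded by $m$, and conditioning on $P_j$ gives $E[Y_j] = E\bigl[m\, P_j(S \le t)\bigr] = m\, P_\bbp(S \le t) = m \log t$, where the last equality is your (correct) Step 2. The strong law applied to $(Y_j)$ yields almost sure convergence of the empirical distribution along $N = Jm$, and since $m$ is fixed, an arbitrary $N$ differs from the nearest multiple of $m$ by a contribution of at most $m/N \to 0$. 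Your countable-dense-set and monotonicity argument then upgrades this to all $t \in [1,10)$ simultaneously. Note also that the paper itself offers no proof of this theorem --- it only cites \cite[Theorem 8.44]{BerAH15} --- so the relevant comparison is to what a correct self-contained argument requires, and the block-level strong law is precisely the ingredient your proposal is missing.
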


\begin{proof}
See \cite[Theorem 8.44]{BerAH15}, noting that slightly different
assumptions and notations are used there. 
\end{proof}

\begin{example}\label{ex361}
Since the classical Dubins-Freedman construction $\bbp_{\rm DF}$ has
scale- and base-unbiased significant digits (and has, with probability
one, no atoms), by Theorem \ref{thm339} above, combining random
samples from random distributions generated by $\bbp_{\rm DF}$ will guarantee that the empirical distribution of the combined sample converges to Benford's law.
\end{example}

\section{{\bf Common Errors}}\label{sec6}

The purpose of this section is to familiarize the reader with several recurring errors in the literature on Benford's law, in order that they may be avoided in future research and applications.

\bigskip

\noindent
{\bf  {\em Error 1. To be Benford, a random variable or dataset needs to cover at least several orders of magnitude.}}

\medskip

As seen in Example \ref{ex354}(ii), if $U$ is uniformly distributed on $[0,1]$, then $X=10^U$ is exactly Benford, yet $X$ takes only values between $1$ and $10$.

\bigskip

\noindent
{\bf  {\em Error 2.  Exponential sequences $(a^n) = (a, a^2,  a^3, \ldots)$ can generally be assumed to be Benford.}}

\medskip

As seen in Example \ref{ex331}, some exponentially increasing
sequences such as $(2^n)$ are Benford, and some such as
$(10^{n/2})$ are not, so care is needed. Even sequences
$(a^n)$ where $a$ is a rational power of $10$, although never Benford
exactly, may be very close to being Benford depending on $a$, as can be seen by looking at the sequence $(10^{n/100})$, since $(\langle \frac{n}{100} \rangle)$ is clearly close to being uniformly distributed on $[0,1]$. 

On the other hand {\em most} exponential sequences are Benford
in the sense that if the base number $x$ is selected at random via any
continuous distribution, then the sequence $(x^n)$ is Benford with
certainty (see Theorem \ref{thm312}), i.e., with probability one.

In contrast to this exponential case, no sequence $(na) = (a, 2a, 3a, 4a, \ldots)$ is Benford. Similarly, sequences of sums of i.i.d. random variables with finite variance are {\em never} Benford, as shown in \cite[Theorem 8.30]{BerAH15}. The authors conjecture that the restriction to distributions with finite variance is not necessary, and that 
``perhaps even no random walk on the real line at all has Benford paths (in distribution or with probability one)"  \cite[p.~200]{BerAH15}.

\bigskip

\noindent
{\bf {\em Error 3.  If a distribution or dataset has large spread and
    is regular, then it is close to Benford.}}

\medskip

Unfortunately, this error continues to be widely propagated, likely
because it may be traced back to the classical probability text of
Feller; see \cite{BerAH11B}. As the next example shows, this conclusion does not even hold for the ubiquitous and fundamental normal distribution.

\noindent

\begin{example}\label{ex347}
If $X = N(7, 1)$ then $P(D_1(X) = 1) \leq 0.000001$, so $X$ is not
close to being Benford.  Here $X$ is ``regular'' or ``smooth'' by
almost any criterion, and has standard deviation $1$, which may or may
not fit the criteria of having a ``large spread''.  On the other hand,
$Y = 100X$ is also regular and has much larger standard deviation than
$X$, but clearly $P(D_1(X) = 1) = P(D_1(Y) = 1)$, so $Y$ is also far
from being Benford.
\end{example}

Similarly, no uniform distribution is close to being Benford no matter how spread out it is, and in this case a universal discrepancy between uniform and Benford can be quantified.

\begin{example}\label{ex348}
No uniform random variable is close to Benford's law.  In particular, by \cite[Theorem 5.1]{BerAT18}, if $X$ is a uniform random variable, i.e.,  $X$ is uniformly distributed on $[a, b]$ for some $a<b$, then for some $1<t<10$,
$$
\lvert P(S(X) \leq t) - \log t \rvert \ge  0.0758 \ldots \, ;
$$
if $X\ge 0$ or $X\le 0$ with probability one then the (sharp) numerical bound on the
right is even larger, namely $0.134\ldots $.
\end{example}

Similar bounds away from Benford's law exist for normal and
exponential distributions, for example, but for these distributions
the corresponding sharp bounds are unknown \cite[p.~40]{BerAH15}.

\bigskip

\noindent
{\bf  {\em Error 4. There are relatively simple intuitive arguments to explain Benford's law in general.}}

\medskip

For some settings, such as exponentially increasing sequences of
constants, fairly simple arguments can be given to show when a
sequence is Benford, as was seen in Theorem \ref{thm330}. On the other
hand, there is currently no simple intuitive argument to explain the
appearance of Benford's law in the wide array of contexts in which it has been
observed, including statistics, number theory, dynamical systems, and
real-world data. More concretely, there is no theory at all, let alone
a simple one, even to decide whether the sequence $(1,2,5,26,677,
\ldots)$ starting with $1$ and proceeding by squaring the last number
and adding $1$, is Benford or not; see  Example \ref{ex324}(i). 
The interested reader is referred to \cite {BerAH11B} for a more detailed treatise on the difficulty of finding an easy explanation of Benford's law.


\section*{Acknowledgements}
The first author was partially supported by an NSERC Discovery Grant. 
Both authors are grateful to the Joint Research Centre of the European
Commission for the invitation to speak at their {\em Cross-domain conference on Benford's Law Applications\/} in Stresa, Italy in July
2019, and especially to the organizers of that conference, Professors 
Domenico Perrota, Andrea Cerioli, and Lucio Barabesi for their warm
hospitality.

\end{document}